\newtheorem{theorem}{Theorem}
\newtheorem{lemma}[theorem]{Lemma}
\theoremstyle{definition}
\newtheorem{definition}[theorem]{Definition}
\newtheorem{example}[theorem]{Example}
\newtheorem{examples}[theorem]{Examples}
\theoremstyle{conjecture}
\newtheorem{conjecture}[theorem]{Conjecture}
\theoremstyle{remark}
\newtheorem{remark}[theorem]{Remark}
\numberwithin{equation}{section}
\begin{document}

\title[Cohomology of Equivariant Eilenberg-Mac Lane Spaces]{$RO(C_2)$-graded Cohomology of $C_2$-equivariant Eilenberg-Mac Lane spaces}

\author{U\u{G}UR Y\.{I}\u{G}\.{I}T}
\address{Istanbul Medeniyet University, Istanbul, TURKEY}
\email{ugur.yigit@medeniyet.edu.tr}
\curraddr{Department of Mathematics, Istanbul Medeniyet University, H1-20, Istanbul, TURKEY 34700}
\thanks{}
\subjclass[2000]{}
\date{}
\dedicatory{}

\begin{abstract}
In this paper, we calculate $RO(C_2)$-graded cohomology of $C_2$-equivariant Eilenberg-Mac Lane spaces $K(\underline{Z/2}, n+\sigma)$ for $n\geq 0$. These can be used to give the relation between equivariant lambda algebra and equivariant Adams resolution and equivariant unstable Adams spectral sequence, which are defined in author`s dissertation. 
\end{abstract}

\keywords{Equivariant Cohomology, Equivariant Steenrod algebra, Equivariant Eilenberg-Mac Lane Spaces}

\maketitle

\tableofcontents

\begin{section}{Introduction}

An ordinary cohomology theory $H_G^{\star}(-: \underline{M})$ on $G$-spaces with Mackey functor $\underline{M}$ coefficients and graded by real orthogonal representations is defined by Lewis, May and Mcclure \cite{LMM1981}. In this paper, we compute the $RO(C_2)$-graded cohomology of the $C_2$-equivariant Eilenberg-Mac Lane spaces with the constant Mackey functor $\underline{M}=\underline{Z/2}$ coefficients, which are crucial to give the relation between the equivariant lambda algebra and the equivariant  unstable Adams resolution and equivariant unstable Adams spectral sequence, which is given by Mahowald \cite{Mah:ImJ} in the classical case. Throughout this paper, $H^{\star}(-)$ denotes the ordinary $RO(C_2)$-graded cohomology of a $C_2$-space with the constant Mackey functor coefficients $\underline{\mathbb{Z}/2}$.

To compute the $RO(C_2)$-graded cohomology of the $C_2$-equivariant Eilenberg-Mac Lane spaces with the constant Mackey functor $\underline{M}=\underline{Z/2}$ coefficients, we use Borel theorem \ref{Bor} for the path-space fibration
$$\Omega K(\underline{Z/2}, V)\longrightarrow P(K(\underline{Z/2}, V))\longrightarrow K(\underline{Z/2}, V).$$
for $V=\sigma+n$, where $n\geq 0$.

If we knew $H^{\star}( K(\underline{Z/2}, n\sigma))$ for $n\geq 2$, one could use the Eilenberg-Moore spectral sequence \cite[Chapter 5]{Hillfree}, the Borel theorem, and the $RO(G)$-graded Serre spectral sequence of Kronholm \cite[Theorem 1.2.]{WKronholm2010-2} for the path-space fibration
$$\Omega K(\underline{Z/2}, V)\longrightarrow P(K(\underline{Z/2}, V))\longrightarrow K(\underline{Z/2}, V).$$

This paper is organized as follows. In section \ref{s2}, we provide the basic equivariant topology tools, and $C_2$-equivariant cohomology $M^{C_2}_2$ of a point, and equivariant connectivity of $G$-spaces. In section \ref{s3}, we descripe equivariant Steenrod squares, $C_2$-equivariant Steenrod algebra $\mathcal{A}_{C_2}$ and axioms of it. In section \ref{s4}, we give the definition of the equivariant Eilenberg-Mac Lane spaces with some properties, and the fixed point sets of the equivariant Eilenberg-Mac Lane spaces that is very useful to compute the cohomology of them. In section \ref{s5}, we compute the $RO(C_2)$-graded $C_2$-equivariant cohomology of some $C_2$-equivariant Eilenberg-Mac Lane spaces $K_V$ for real orthogonal representations $V=\sigma+n, \ n\geq 0$. Also, we give some conjectures and future directions for the other cases.\\
\textbf{Notation.} We provide here notation used in this paper for convenience.
\begin{itemize}
\item[$\bullet$]$V=r\sigma+s$, a real orthogonal representation of $C_2$, which is a sum of $r$-copy of the sign representation $\sigma$ and $s$-copy of the trivial representation $1$.
\item[$\bullet$]$\rho=\sigma+1$, the regular representation of $C_2$.
\item[$\bullet$]$RO(C_2)$, the real representation ring of $C_2$.
\item[$\bullet$]$S^V$, the equivariant sphere which is the one-point compactification of $V$.
\item[$\bullet$]$\pi_{V}^{C_2}(X)$, the $V$-th $C_2$-equivariant homotopy group of a topological $C_2$-space $X$.
\item[$\bullet$]$\pi_{r\sigma+s}^S$, the $C_2$-equivariant stable homotopy groups of spheres.
\item[$\bullet$]$\Sigma^{\sigma}(X)$, the $\sigma$-th suspension of $X$.
\item[$\bullet$]$\Omega^{\sigma}(X)$, all continuous functions from $S^{\sigma}$ to $X$.
\item[$\bullet$]$H_G^{\star}(-: \underline{M})$, RO($G$)-graded ordinary equivariant cohomology with Mackey functor $\underline{M}$ coefficients.
\item[$\bullet$]$M^{C_2}_2$, RO($C_2$)-graded $C_2$-equivariant cohomology of a point.
\item[$\bullet$]$\mathcal{A}_{C_2}$, $C_2$-equivariant Steenrod algebra.
\item[$\bullet$]$K(\underline{M}, V)$ or shortly $K_V$, the $V$th equivariant Eilenberg-Mac Lane space with a Mackey functor $\underline{M}$.
\item[$\bullet$]$\underline{\pi}_{V}^G(X)$, $C_2$-equivariant homotopy of a $G$-space $X$ as a Mackey functor.
\item[$\bullet$] $Sq_{C_2}^{k}$, $C_2$-equivariant Steenrod squaring operations for $k\geq 0$.
\item[$\bullet$] $RP^{\infty}_{tw}$, the space of lines in the
complete universe $\mathcal{U}=(\mathbb{R}^{\rho})^{\infty}$, which is equivalent to $K(\underline{Z/2}, \sigma)$.
\end{itemize}
\end{section}
\textbf{Acknowledgements.} I would like to thank Michael A. Hill for valuable conversations and providing me some suggestions for calculations, and William Kronholm for producing the action of Steenrod squares on the cohomology ring of $RP_{tw}^{\infty}$. Lastly and most importanly, I would like to state my gratitude to my advisor, Douglas C. Ravenel, for his patience, support, and encouragement throughout my graduate studies, and numerous beneficial conversations and suggestions. The work in this paper was part of the author’s dissertation while at the University of Rochester.

\begin{section}{Preliminaries}\label{s2}

In this section we give the main tools that are used in the rest of the article. Let $X$ be a $G$-space, where $G=C_2$ is a cyclic group with generator $\gamma$ such that $\gamma^2=e$. The group $C_2$ has two irreducible real representations, namely the trivial representation denoted by $1$ (or $\mathbb{R}$)
and the sign representation denoted by $\sigma$ (or $\mathbb{R_-}$). The regular representation is isomorphic to $\rho_{C_2}=1+\sigma$ (it is denoted by $\rho$ if there is no confusion). Thus the representation ring $RO(C_2)$ is free abelian of rank $2$, so every representation $V$ can be expressed as $V=r\sigma+s$.

\begin{definition}\label{Uni}A \textbf{$G$-universe} is a countably infinite-dimensional $G$-representation which contains the trivial
$G$-representation and which contains infinitely many copies of each of its
finite-dimensional subrepresentations. Also, a \textbf{ complete G-universe}
is just a G-universe that contains infinitely many copies of every irreducible
$G$-representation.
\end{definition}
\begin{definition} A $G$-spectrum $E$ on a $G$-universe $\mathcal{U}$ is a collection $E_V$ of based $G$-spaces together with basepoint-preserving $G$-maps
$$\sigma_{V,W}:\Sigma^{W-V}E_V\longrightarrow E_W$$
whenever $V\subset W\subset \mathcal{U}$, where $W-V$ denotes the orthogonal complement of $V$ in $W$. It is required that $\sigma_{V,V}$ is identity, and the commutativity of the diagram

\[
\begin{tikzcd}
    \Sigma^{W-V}\Sigma^{V-U}E_U \arrow{rr}{\Sigma_{W-V}\sigma_{U,V}} \arrow[swap]{dr}{\sigma_{U,W}} & & \Sigma^{W-V}E_V \arrow{dl}{\sigma_{V,W}} \\[10pt]
    & E_W 
\end{tikzcd}
\]
for $U\subset V\subset W\subset \mathcal{U}$.
\end{definition}
\begin{definition} If the adjoint structure maps
$$\tilde{\sigma}_{V,W}
: E_V\longrightarrow \Omega^{W-V}E_W$$
are weak homotopy equivalences for $V\subset W\subset \mathcal{U}$, then a $G$-spectrum is called $G-\Omega$-spectrum.
\end{definition}

A $G$-spectrum indexed on a complete(trivial) $G$-universe is called genuine(naive).

For an actual representation $V$ of $G$ and a $G$-space $X$, the $V$-th
homotopy group of $X$ is the Mackey functor $\underline{\pi}_V (X)$ determined by
$$\underline{\pi}_V (X)(G/H)=[S^V,X]^{H}$$
for every $H<G$.

For a virtual representation $V\in RO(G)$ and a $G$-spectrum $E$, the
$V$-th homotopy group of $E$ is the Mackey functor $\underline{\pi}_V(E)$ determined by
$$\underline{\pi}_V(E)(G/H) = colim_{n}
\pi_0(\Omega^{V+W_n}E_{W_n})^H
$$
where $\{W_n|n\in \mathbb{N}\}$ is an increasing sequence of representations $$\cdots\subset W_n\subset W_{n+1}\subset\cdots$$
such that any finite dimensional representation
$V$ of $G$ admits an equivariant embedding in some $W_n$.

Lewis, May and Mcclure \cite{LMM1981} defined an ordinary cohomology theory $H_G^{\star}(-: \underline{M})$ on $G$-spaces with Mackey functor $\underline{M}$ coefficients and the graded by real orthogonal representations.

Throughout this paper, the Mackey functor will typically be the constant Mackey functor $\underline{M}=\underline{Z/2}$, which can be given the following diagram in Lewis notation.
\begin{equation}\label{ConsMac}
\xymatrix
@R=7mm
@C=10mm{
\mathbb{Z}/2 \ar@/_/[d]_{Id}\\
\mathbb{Z}/2 \ar@(dr,dl)[]^{Id} \ar@/_/[u]_{0}\\
}
\end{equation}

The ordinary equivariant cohomology $M^{C_2}_2$ of a point with this coefficient is given in the Figure \ref{fig:pt} below. Every $\bullet$ in the figure represents a copy of $Z/2$.

As you see in the Figure \ref{fig:pt} below, there are two elements of interest. The inclusion map of the fixed point set (the north and
south poles) $a: S^0\longrightarrow S^{\sigma}$ defines an element in $\pi_{-\sigma}^{C_2}(S^{-0})$,
and we will use the same symbol for its mod $2$ Hurewicz
image. It is called an Euler class. One can show that
$$H_1^{C_2}(S^{\sigma};Z/2)= H^{C_2}_{1-\sigma}(S^{-0};Z/2)=Z/2$$
and we denote its generator by $u$. Dually, we have $a\in H_{C_2}^{\sigma}(S^{-0};Z/2)$ and $u\in H_{C_2}^{\sigma-1}(S^{-0};Z/2)$. These are the analog of elements $\rho$ and $\tau$ in real motivic homotopy theory, respectively.

\begin{figure}[h]
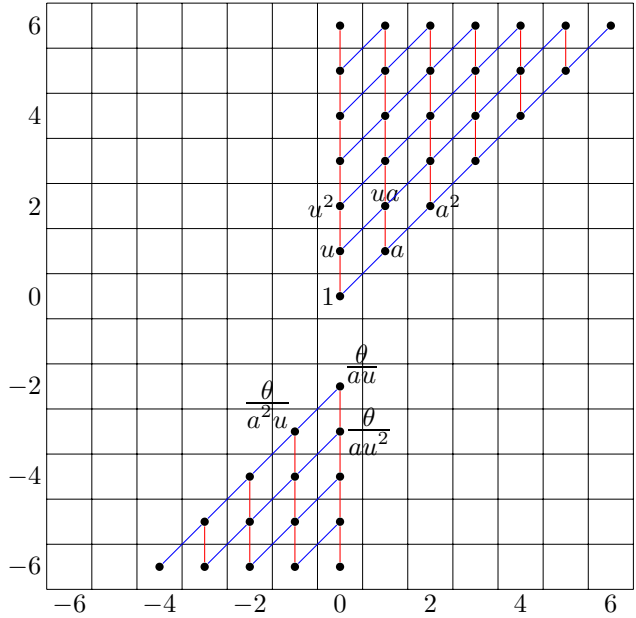

\centering
\begin{sseq}[grid=crossword, entrysize=6mm]{-6...6}{-6...6}
\ssdropbull
\ssdroplabel{1}
\ssline[color=red]{0}{1}
\ssdrop{\bullet}
\ssdroplabel{u}
\ssline[color=red]{0}{1}
\ssdrop{\bullet}
\ssdroplabel{u^2}
\ssline[color=red]{0}{1}
\ssdrop{\bullet}
\ssline[color=red]{0}{1}
\ssdrop{\bullet}
\ssline[color=red]{0}{1}
\ssdrop{\bullet}
\ssline[color=red]{0}{1}
\ssdrop{\bullet}

\ssmove {1}{-5}
\ssdrop{\bullet}
\ssdroplabel[R]{a}
\ssline[color=red]{0}{1}
\ssdrop{\bullet}
\ssdroplabel[U]{ua}
\ssline[color=red]{0}{1}
\ssdrop{\bullet}
\ssline[color=red]{0}{1}
\ssdrop{\bullet}
\ssline[color=red]{0}{1}
\ssdrop{\bullet}
\ssline[color=red]{0}{1}
\ssdrop{\bullet}

\ssmove {1}{-4}
\ssdrop{\bullet}
\ssdroplabel[R]{a^2}
\ssline[color=red]{0}{1}
\ssdrop{\bullet}
\ssline[color=red]{0}{1}
\ssdrop{\bullet}
\ssline[color=red]{0}{1}
\ssdrop{\bullet}
\ssline[color=red]{0}{1}
\ssdrop{\bullet}

\ssmove {1}{-3}
\ssdrop{\bullet}
\ssline[color=red]{0}{1}
\ssdrop{\bullet}
\ssline[color=red]{0}{1}
\ssdrop{\bullet}
\ssline[color=red]{0}{1}
\ssdrop{\bullet}

\ssmove {1}{-2}
\ssdrop{\bullet}
\ssline[color=red]{0}{1}
\ssdrop{\bullet}
\ssline[color=red]{0}{1}
\ssdrop{\bullet}

\ssmove {1}{-1}
\ssdrop{\bullet}
\ssline[color=red]{0}{1}
\ssdrop{\bullet}
\ssmove {1}{0}
\ssdrop{\bullet}

\ssmove {-6} {-6}
\ssline[color=blue]{1}{1}
\ssline[color=blue]{1}{1}
\ssline[color=blue]{1}{1}
\ssline[color=blue]{1}{1}
\ssline[color=blue]{1}{1}
\ssline[color=blue]{1}{1}

\ssmove {-6} {-5}
\ssline[color=blue]{1}{1}
\ssline[color=blue]{1}{1}
\ssline[color=blue]{1}{1}
\ssline[color=blue]{1}{1}
\ssline[color=blue]{1}{1}

\ssmove {-5} {-4}
\ssline[color=blue]{1}{1}
\ssline[color=blue]{1}{1}
\ssline[color=blue]{1}{1}
\ssline[color=blue]{1}{1}

\ssmove {-4} {-3}
\ssline[color=blue]{1}{1}
\ssline[color=blue]{1}{1}
\ssline[color=blue]{1}{1}

\ssmove {-3} {-2}
\ssline[color=blue]{1}{1}
\ssline[color=blue]{1}{1}

\ssmove {-2} {-1}
\ssline[color=blue]{1}{1}

\ssmove {-1}{-8}
\ssdrop{\bullet}
\ssdroplabel[RU]{\frac{\displaystyle\theta}{\displaystyle au}}

\ssline[color=red]{0}{-1}
\ssdrop{\bullet}
\ssdroplabel[R]{\frac{\displaystyle\theta}{\displaystyle au^2}}
\ssline[color=red]{0}{-1}
\ssdrop{\bullet}
\ssline[color=red]{0}{-1}
\ssdrop{\bullet}
\ssline[color=red]{0}{-1}
\ssdrop{\bullet}

\ssmove {-1}{3}
\ssdrop{\bullet}
\ssdroplabel[LU]{\frac{\displaystyle\theta}{\displaystyle a^2u}}
\ssline[color=red]{0}{-1}
\ssdrop{\bullet}
\ssline[color=red]{0}{-1}
\ssdrop{\bullet}
\ssline[color=red]{0}{-1}
\ssdrop{\bullet}

\ssmove {-1}{2}
\ssdrop{\bullet}
\ssline[color=red]{0}{-1}
\ssdrop{\bullet}
\ssline[color=red]{0}{-1}
\ssdrop{\bullet}

\ssmove {-1}{1}
\ssdrop{\bullet}
\ssline[color=red]{0}{-1}
\ssdrop{\bullet}
\ssmove {-1}{0}
\ssdrop{\bullet}

\ssmove {4}{4}
\ssline[color=blue]{-1}{-1}
\ssline[color=blue]{-1}{-1}
\ssline[color=blue]{-1}{-1}
\ssline[color=blue]{-1}{-1}

\ssmove {4}{3}
\ssline[color=blue]{-1}{-1}
\ssline[color=blue]{-1}{-1}
\ssline[color=blue]{-1}{-1}

\ssmove {3}{2}
\ssline[color=blue]{-1}{-1}
\ssline[color=blue]{-1}{-1}

\ssmove {2}{1}
\ssline[color=blue]{-1}{-1}

\end{sseq}

\caption{The equivariant cohomology $M_{2}^{C_2}$ of a point} \label{fig:pt}
\end{figure}

The coordinate $(x, y)$ represents degree $(x-y)+\sigma y$, which is convenient with the motivic bidegree. Red and blue lines represent multiplication by $u$ and $a$, respectively. 

Now, we will give the definition of equivariant connectivity of $G$-spaces.
\begin{definition}\cite{Lewis92}
\begin{enumerate}
\item[(i)] A function $\nu^*$ from the set of conjugacy classes of subgroups of $G$ to the integers is called a \textbf{dimension function}. The value of $\nu^*$ on the
conjugacy class of $K \subset G$ is denoted by $\nu^K$. Let $\nu^*$ and $\mu^*$ be two dimension
functions. If $\nu^K \geq \mu^K$ for every subgroup $K$, then $\nu^*\geq \mu^*$. Associated to
any $G$-representation $V$ is the dimension function $|V^*|$ whose value at $K$ is
the real dimension of the $K$-fixed subspace $V^K$ of $V$. The dimension function
with constant integer value $n$ is denoted $n^*$ for any integer $n$.\\
\item[(ii)] Let $\nu^*$ be a non-negative dimension function. If for each subgroup $K$ of $G$, the fixed point space $Y^K$ is $\nu^K$-connected, then a $G$-space $Y$ is called \textbf{$G$-$\nu^*$-connected}. If A $G$-space $Y$ is $G$-$0^*$-connected, then it is called \textbf{$G$-connected}. Also, if it is $G$-$1^*$-connected, it is called \textbf{simply $G$-connected}. A $G$-space $Y$ is \textbf{homologically $G$-$\nu^*$-connected} if, for every subgroup $K$ of $G$ and every integer $m$
with $0\leq m\leq \nu^K$ , the homology group $H_m^K(Y)$ is zero.\\
\item[(iii)] Let $\nu^*$ be a non-negative dimension function and let $f:Y\longrightarrow Z$ be a
$G$-map. If, for every subgroup $K$ of $G$,
\begin{displaymath}
(f^K)_*:\pi_m(Y^k)\longrightarrow \pi_m(Z^K)
\end{displaymath}
is an isomorphism for every integer $m$ with $0\leq m < \nu^K$ and an epimorphism
for $m = \nu^K$, then $f$ is called \textbf{$G$-$\nu^*$-equivalence}. A $G$-pair $(Y, B)$ is said to be \textbf{$G$-$\nu^*$-connected} if the inclusion of
$B$ into $Y$ is a $G$-$\nu^*$-equivalence. The notions of \textbf{homology $G$-$\nu^*$-equivalence}
and of \textbf{homology $G$-$\nu^*$-connectedness} for pairs are defined similarly, but with
homotopy groups replaced by homology groups.\\
\item[(iv)] Let $V$ be a $G$-representation. For each subgroup $K$ of $G$, let $V(K)$
be the orthogonal complement of $V^K$ ; then $V(K)$ is a $K$-representation. If  $\pi_{V(K)+m}^K(Y)$ is zero for each subgroup $K$ of $G$ and each integer $m$ with $0\leq m\leq |V^K|$, the
$G$-space $Y$ is called \textbf{$G$-$V$-connected}. Similarly, if $H_{V(K)+m}^G(Y)$ is zero for each subgroup $K$ of $G$ and each integer $m$ with $0\leq m\leq |V^K|$, then the $G$-space $Y$ is called \textbf{homologically $G$-$V$-connected}.\\
\item[(v)] Let $V$ be a $G$-representation. A $G$-$0^*$-equivalence $f: Y\longrightarrow Z$ is said to
be a \textbf{$G$-$V$-equivalence} if, for every subgroup $K$ of $G$, the map
\begin{displaymath}
f_*: \pi_{V(K)+m}^K(Y)\longrightarrow \pi_{V(K)+m}^K(Z)
\end{displaymath}
is an isomorphism for every integer $m$ with $0\leq m <|V^K|$ and an epimorphism
for $m = |V^K|$. A \textbf{homology $G$-$V$-equivalence} is defined similarly. A $G$-pair
$(Y, B)$ is called \textbf{$G$-$V$-connected} (respectively, \textbf{homologically $G$-$V$-connected}) if the inclusion of $B$ into $Y$ is a $G$-$V$-equivalence (respectively, homology $G$-$V$-equivalence).
\end{enumerate}
\end{definition}
  
\end{section}

\begin{section}{$C_2$-Equivariant Steenrod Algebra}\label{s3}

The analog of the mod $2$ Steenrod algebra is defined by Voevodsky \cite{Voe:Red} in the motivic case, and Po Hu and Igor Kriz \cite{HuKriz1} in the equivariant case. The two descriptions are essentially the same.

One has squaring operations $Sq^k_{C_2}$ for $k\geq 0$, whose degrees 

$$|Sq^k_{C_2}|=\left\{
\begin{array}{ll}
      i(1+\sigma) & for \ k=2i \\
      i(1+\sigma)+1 & for \ k=2i+1. \\
\end{array} 
\right.$$ \\

$Sq^0_{C_2}=1$ as in the classical case. The $C_2$-equivariant Steenrod algebra acts on the coefficient ring $M^{C_2}_2$ by
\begin{equation}\label{act}
Sq^k_{C_2}(u)=\left\{
\begin{array}{ll}
      u & for \ k=0 \\
      a & for \ k=1 \\
      0 & else. \\
\end{array} 
\right.
\end{equation}

\begin{equation}\label{act2}
Sq^{2m+\delta}_{C_2}(u^{2l+\epsilon})= \binom{2l+\epsilon}{2m+\delta} u^{2l+\epsilon-m-\delta}a^{2m+\delta}
\end{equation}
The difficulty in deriving the formula \ref{act2} is the $C_2$-equivariant Cartan formula \ref{car}, \ref{car2}. Since
$$|Sq_{C_2}^{2m+\delta}|=m(1+\sigma)+\delta \ \ \ \ \text{for} \ 0\leq \delta \leq 1,$$
we have
\begin{equation}\label{6}
\left\{
\begin{array}{ll}
      \Delta(Sq_{C_2}^{2m+1})=\sum_{0\leq i \leq 2m+1} Sq_{C_2}^i\otimes Sq_{C_2}^{2m+1-i}\\
      
     \\
      \Delta(Sq_{C_2}^{2m})=\sum_{0\leq j \leq m} Sq_{C_2}^{2i}\otimes Sq_{C_2}^{2m-2j}+ u \sum_{1\leq j \leq m} Sq_{C_2}^{2j-1}\otimes Sq_{C_2}^{2m-2j+1}.\\
\end{array} 
\right.
\end{equation}
The terms divisible by $u$ make things difficult. Here we are using cohomological degree, so $|u|=\sigma-1$. Note that
$$|u^{-m}Sq_{C_2}^{2m+\delta}|= m(1-\sigma)+m(1+\sigma)+\delta=2m+\delta$$
and define
$$\mathcal{S}q^{2m+\delta}:=u^{-m}Sq_{C_2}^{2m+\delta}.$$
We will see that these operations satisfy the classical Cartan formula. We have

\begin{align*}
\Delta(\mathcal{S}q^{2m+1})&=u^{-m}\Delta(Sq_{C_2}^{2m+1})\\
\
&=u^{-m}\sum_{0\leq i \leq 2m+1} Sq_{C_2}^i\otimes Sq_{C_2}^{2m+1-i}\\
\
&=\sum_{0\leq i \leq 2m+1} u^{-\lfloor i/2 \rfloor}Sq_{C_2}^i\otimes u^{-\lfloor (2m+1-i)/2 \rfloor}Sq_{C_2}^{2m+1-i}\\
\
&=\sum_{0\leq i \leq 2m+1} \mathcal{S}q_{C_2}^i\otimes \mathcal{S}q_{C_2}^{2m+1-i}
\end{align*}
$\text{since}$  $\lfloor i/2 \rfloor+ \lfloor (2m+1-i)/2 \rfloor=m$. And also,

\begin{align*}
\Delta(\mathcal{S}q^{2m})&=u^{-m}\Delta(Sq_{C_2}^{2m})\\
&=u^{-m}\sum_{0\leq j \leq m} Sq_{C_2}^{2j}\otimes Sq_{C_2}^{2m-2j}+u^{1-m}\sum_{1\leq j \leq m} Sq_{C_2}^{2j-1}\otimes Sq_{C_2}^{2m-2j+1}\\
&=\sum_{0\leq j \leq m} u^{-j}Sq_{C_2}^{2j}\otimes u^{j-m}Sq_{C_2}^{2m-2j}+\sum_{1\leq j \leq m} u^{1-j}Sq_{C_2}^{2j-1}\otimes u^{j-m}Sq_{C_2}^{2m-2j+1}\\
&=\sum_{0\leq j \leq m} \mathcal{S}q_{C_2}^{2j}\otimes \mathcal{S}q_{C_2}^{2m-2j}+\sum_{1\leq j \leq m}\mathcal{S}q_{C_2}^{2j-1}\otimes \mathcal{S}q_{C_2}^{2m-2j+1}\\
&=\sum_{0\leq i \leq 2m} \mathcal{S}q_{C_2}^i\otimes \mathcal{S}q_{C_2}^{2m-i}.
\end{align*}
Now, if we use homological degree, then
$$|\mathcal{S}q^m|=-m, |a|=-\sigma, \ \text{and} \ |u|=1-\sigma.$$
We know that 
\begin{equation}\label{act3}
Sq^m_{C_2}(u)=\left\{
\begin{array}{ll}
      u & for \ m=0 \\
      a & for \ m=1 \\
      0 & else. \\
\end{array} 
\right.
\end{equation}
Consider the total Steenrod operation
\begin{equation}\label{total}
\mathcal{S}q_t=\sum_{i\geq 0} t^{i}\mathcal{S}q^i,
\end{equation}
where $t$ is a dummy variable. Although this sum is infinite, it yields a finite sum when applied to any monomial in $a$ and $u$. The classical Cartan formula satisfied by operations $\mathcal{S}q^{i}$ implies that it is a ring homomorphism, meaning that
$$\mathcal{S}q_t(xy)=\mathcal{S}q_t(x) \mathcal{S}q_t(y).$$
Then \ref{act3} implies that
\begin{align*}
\mathcal{S}q_t(u)=u+ta
\end{align*}
\begin{align*}
\mathcal{S}q_t(u^l)&=(u+ta)^l\\
&=\sum_{0\leq m \leq l} \binom{l}{m}t^mu^{l-m}a^m\\
&=\sum_{0\leq m \leq l}t^m\mathcal{S}q^m(u^l).
\end{align*}
Hence, $\mathcal{S}q^{m}(u^l)$ is the coefficient of $t^m$ in the first sum above.

It follows that
\begin{align*}
\mathcal{S}q^{2m+\delta}(u^{2l+\epsilon})&=\binom{2l+\epsilon}{2m+\delta} u^{2l+\epsilon-2m-\delta} a^{2m+\delta}\\
\mathcal{S}q^{2m+\delta}(u^{2l+\epsilon})&=u^m\mathcal{S}q^{2m+\delta}(u^{2l+\epsilon})\\
&=\binom{2l+\epsilon}{2m+\delta} u^{2l+\epsilon-m-\delta} a^{2m+\delta}.
\end{align*}
As a result, we have the following:
\begin{lemma}\label{lem1}
$$
Sq^{2m+\delta}_{C_2}(u^{2l+\epsilon})= \binom{2l+\epsilon}{2m+\delta} u^{2l+\epsilon-m-\delta}a^{2m+\delta}.
$$
\end{lemma}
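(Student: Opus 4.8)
The plan is to sidestep the equivariant Cartan formula's $u$-divisible correction term by passing to renormalized operations that behave exactly like the classical mod $2$ squares, carry out the computation there, and then translate back. The one genuine difficulty is the even coproduct in \ref{6}: the extra summand $u\sum Sq_{C_2}^{2j-1}\otimes Sq_{C_2}^{2m-2j+1}$ prevents the $Sq_{C_2}^k$ from being multiplicative on the nose, so a naive total-square argument fails. Everything is arranged to make that term disappear.

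First I would record the two inputs already available: the action on the generator, $\mathcal{S}q^0(u)=u$, $\mathcal{S}q^1(u)=a$ and zero otherwise from \ref{act3}, together with the comultiplication \ref{6}. Next, motivated by the degree count $|u^{-m}Sq_{C_2}^{2m+\delta}|=2m+\delta$, I would introduce $\mathcal{S}q^{2m+\delta}:=u^{-m}Sq_{C_2}^{2m+\delta}$ and check that these satisfy the ordinary Cartan formula $\Delta(\mathcal{S}q^{k})=\sum_{i}\mathcal{S}q^{i}\otimes\mathcal{S}q^{k-i}$. This is the crux: after multiplying \ref{6} by $u^{-m}$ one redistributes the powers of $u$ across the two tensor factors, and the floor identity $\lfloor i/2\rfloor+\lfloor(2m+1-i)/2\rfloor=m$ handles the odd case, while in the even case the lone factor of $u$ in the correction term is precisely what is needed to turn $u^{1-m}Sq_{C_2}^{2j-1}\otimes Sq_{C_2}^{2m-2j+1}$ into $\mathcal{S}q^{2j-1}\otimes\mathcal{S}q^{2m-2j+1}$.

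With multiplicativity established, I would form the total operation $\mathcal{S}q_t=\sum_{i\geq0}t^{i}\mathcal{S}q^{i}$ of \ref{total}; being built from operations satisfying the classical Cartan formula, it is a ring homomorphism. Evaluating on the single generator via \ref{act3} gives $\mathcal{S}q_t(u)=u+ta$, and multiplicativity (together with the fact that $u$ and $a$ commute) then yields $\mathcal{S}q_t(u^{l})=(u+ta)^{l}=\sum_{m}\binom{l}{m}t^{m}u^{l-m}a^{m}$. Reading off the coefficient of $t^{m}$ gives $\mathcal{S}q^{m}(u^{l})=\binom{l}{m}u^{l-m}a^{m}$.

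Finally I would undo the renormalization. Specializing this last formula at exponent $2l+\epsilon$ and index $2m+\delta$, and using $Sq_{C_2}^{2m+\delta}=u^{m}\mathcal{S}q^{2m+\delta}$, the factor $u^{m}$ raises the exponent $2l+\epsilon-2m-\delta$ to $2l+\epsilon-m-\delta$, producing exactly the stated identity. I expect the middle step---verifying that the renormalized squares are honestly multiplicative---to be the main obstacle; once that is in place, the remainder is the standard evaluation of a total square on a polynomial generator.
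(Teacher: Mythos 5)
Your proposal is correct and follows essentially the same route as the paper: renormalizing to $\mathcal{S}q^{2m+\delta}=u^{-m}Sq_{C_2}^{2m+\delta}$, verifying the classical Cartan formula from the coproduct \ref{6} (with the same floor identity in the odd case and the same absorption of the $u$-divisible term in the even case), evaluating the total square $\mathcal{S}q_t(u^l)=(u+ta)^l$, and multiplying back by $u^m$ to recover the stated exponent $2l+\epsilon-m-\delta$. There is nothing to add; this is the paper's own argument.
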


The natural action of the Steenrod algebra in homology is on the right, not on the left. Classically, the $\mod p$ cohomology of a space or a spectrum $X$ is a left module over the Steenrod algebra $\mathcal{A}$, so there is a map
$$c_X: A\otimes H^{\ast}X\rightarrow H^{\ast}X.$$
The Steenrod algebra has a multiplication
$$\phi^{\ast}:\mathcal{A}\otimes \mathcal{A}\rightarrow \mathcal{A}$$
(the symbol $\phi^{\ast}$ and its dual $\phi_{\ast}$ are taken from Milnor's paper \cite{MilA}) and the following diagram commutes
\begin{equation}\label{diag}
\begin{tikzcd}
 \mathcal{A}\otimes\mathcal{A}\otimes H^{\ast}X \ \ \arrow[r, "\phi^{\ast}\otimes H^{\ast}X"] \arrow[d, "\mathcal{A}\otimes c_X"]
& \mathcal{A}\otimes H^{\ast}X \arrow[d, "c_X"] \\
\mathcal{A}\otimes H^{\ast}X \arrow[r, "c_X"]
& H^{\ast}X.
\end{tikzcd}
\end{equation}

Milnor defines a right action of $\mathcal{A}$ on $H_{\ast}X$ by the rule
$$\langle xa, y \rangle=\langle x, ay \rangle\in \mathbb{F}_p$$
for $x\in H_{\ast}X, \ a\in \mathcal{A}$ and $y\in H^{\ast}X$, where the brackets denotes the evaluation of the cohomology class on the right on the homology class on the left. Milnor denotes by $\lambda^{\ast}$ the resulting map
$$H_{\ast}X\otimes \mathcal{A}\rightarrow H_{\ast}X.$$

The same thing happens in the $C_2$-equivariant case. For example, we have
$$(u^2)Sq_{C_2}^{3}=(u^2)Sq_{C_2}^{1}Sq_{C_2}^{2}=0$$
because $(u^2)Sq_{C_2}^{1}=0$. And,
$$(u^2)\chi(Sq_{C_2}^{3})=(u^2)Sq_{C_2}^{2}Sq_{C_2}^{1}=(ua^2)Sq_{C_2}^{1}=a^3,$$
where $\chi(-)$ means the conjugate Steenrod operations. Hence, \ref{act2} should really read as
$$(u^{2l+\epsilon})Sq^{2m+\delta}_{C_2}= \binom{2l+\epsilon}{2m+\delta} u^{2l+\epsilon-m-\delta}a^{2m+\delta}.$$

For example,
\begin{align*}
Sq^{l}_{C_2}(u^{-1}) &=\binom{-1}{l}a^lu^{-1-l}\\
&=\left\{
\begin{array}{ll}
      \binom{-1}{0}u^{-1}=u^{-1} & \text{for} \ l=0\\
      \\
      \binom{-1}{1}au^{-2}=au^{-2} &  \text{for} \ l=1\\
      \\
      \binom{-1}{2}a^2u^{-3}=a^2u^{-3} & \text{for} \ l=2\\
      \\
      0 & \text{for} \ l\geq 3\\
\end{array} 
\right.\\
\end{align*}

Action on the other elements is determined by the Cartan formula (iv) given below. We now give axioms for the squares $Sq^k_{C_2}$. For the motivic case, you can check Voevodsky paper \cite{Voe:Red}. But, the Adem relation is fixed by Jo\"el Riou in \cite{1207.3121}.
\begin{itemize}
\item[(i)] $Sq^0_{C_2}=1$ and $Sq^1_{C_2}=\beta_{C_2}$, Bockstein homomorphism.
\item[(ii)] $\beta Sq^{2k}_{C_2}=Sq^{2k+1}_{C_2}$.
\item[(iii)]$\beta Sq^{2k+1}_{C_2}=0$.
\item[(iv)](Cartan formula)\label{Car}
\begin{equation}\label{car}
Sq^{2k}_{C_2}(xy)= \sum_{r=0}^{k}Sq^{2r}_{C_2}(x)Sq^{2k-2r}_{C_2}(y)+u\sum_{s=0}^{k-1}Sq^{2s+1}_{C_2}(x)Sq^{2k-2s-1}_{C_2}(y)
\end{equation}
\begin{equation}\label{car2}
Sq^{2k+1}_{C_2}(xy)= \sum_{r=0}^{2k+1}Sq^{r}_{C_2}(x)Sq^{2k+1-r}_{C_2}(y)+a\sum_{s=0}^{k-1}Sq^{2s+1}_{C_2}(x)Sq^{2k-2s-1}_{C_2}(y)
\end{equation}
\item[(v)](Adem relation) If $0<i<2j$, then
when $i+j$ is even
$$Sq^i_{C_2}Sq^j_{C_2}=\sum_{k=0}^{[i/2]}\binom{b-1-k}{i-2k}u^{\epsilon}Sq^{i+j-k}_{C_2}Sq^k_{C_2}$$
where
$$\epsilon=\left\{
\begin{array}{ll}
      1 &  for \ k \ is \ odd \ and \ i \ and \ j \ are \ even\\
      0 &  else\\
\end{array} 
\right.$$
when $i+j$ is odd
$$Sq^i_{C_2}Sq^j_{C_2}=\sum_{k=0}^{[i/2]}\binom{j-1-k}{i-2k}Sq^{i+j-k}_{C_2}Sq^k_{C_2}+a\sum_{k= odd}\varepsilon \ Sq^{i+j-k}_{C_2}Sq^k_{C_2}$$
where
$$\varepsilon=\left\{
\begin{array}{ll}
      \binom{j-1-k}{i-2k} &  for \ i \ is \ odd\\
      \binom{j-1-k}{i-2k-1} &  for \ j \ is \ odd\\
\end{array} 
\right.$$
\item[(vi)] If $x$ has a degree $k\sigma+k$, then $Sq^{2k}_{C_2}(x)=x^2$.
\item[(vii)](instability) If $x$ has a degree $V$, $V<k \sigma+k$ then $Sq^{2k}_{C_2}(x)=0$, where $V<V^{'}$ if and only if $V^{'}=V+W$ for some actual representations $W$ with positive degree.
\end{itemize}
Note that setting $u= 1$ and $a= 0$ reduces the Cartan formula (iv) to the classical Cartan formula, and Adem relation (v) to the classical Adem relation.

\begin{examples} We have
$$Sq^1_{C_2}Sq^n_{C_2}=\left\{
\begin{array}{ll}
      Sq^{n+1}_{C_2} &  for \ n \ is \ even\\
      0 &  for \ n \ is \ odd\\
\end{array} 
\right.$$
$$Sq^2_{C_2}Sq^n_{C_2}=\left\{
\begin{array}{ll}
      Sq^{n+2}_{C_2}+uSq^{n+1}_{C_2}Sq^1_{C_2} &  for \ n\equiv 0 \ mod \ 4\\
      Sq^{n+1}_{C_2}Sq^1_{C_2} &  for \ n\equiv 1 \ mod \ 4\\
      uSq^{n+1}_{C_2}Sq^1_{C_2} &  for \ n\equiv 2 \ mod \ 4\\
      Sq^{n+2}_{C_2}+Sq^{n+1}_{C_2}Sq^1_{C_2} &  for \ n\equiv 3 \ mod \ 4\\
\end{array} 
\right.$$
and
$$Sq^3_{C_2}Sq^n_{C_2}=\left\{
\begin{array}{ll}
      Sq^{n+3}_{C_2}+aSq^{n+1}_{C_2}Sq^1_{C_2} &  for \ n\equiv 0 \ mod \ 4\\
      Sq^{n+2}_{C_2}Sq^1_{C_2} &  for \ n\equiv 1 \ mod \ 4\\
      aSq^{n+1}_{C_2}Sq^1_{C_2} &  for \ n\equiv 2 \ mod \ 4\\
      Sq^{n+2}_{C_2}Sq^1_{C_2} &  for \ n\equiv 3 \ mod \ 4\\
\end{array} 
\right.$$
\end{examples}

Now, let $Sq^I_{C_2}$ denote $Sq^{i_1}_{C_2}Sq^{i_2}_{C_2}\cdots Sq^{i_n}_{C_2}$ for a sequence of integers $I=(i_1, \cdots, i_n)$. The sequence $I$ is said to be \textbf{admissible} if $i_s\geq 2i_{s+1}$ for all $s\geq1$, where $i_{s+1}=0$.

The operations $Sq^I_{C_2}$ with admissible $I$ are called admissible monomials. We also call $Sq^0_{C_2}$ admissible, where $Sq^0_{C_2}=Sq^I_{C_2}$ for empty $I$.
\begin{lemma} The admissible monomials form a basis for the $C_2$-equivariant Steenrod algebra $\mathcal{A}_{C_2}$ as a $H^{\star}(pt)$-module. 
\end{lemma}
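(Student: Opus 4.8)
The plan is to imitate the classical Serre--Steenrod argument, splitting the claim into a \emph{spanning} statement and a \emph{linear independence} statement, while tracking the coefficient ring $H^{\star}(pt)=M^{C_2}_2$ throughout; the essential new feature is that this ring is not a field. For spanning, I would first show that an arbitrary monomial $Sq^{I}_{C_2}=Sq^{i_1}_{C_2}\cdots Sq^{i_n}_{C_2}$ lies in the $H^{\star}(pt)$-span of the admissible ones. Whenever the sequence is inadmissible there is an adjacent pair with $i_s<2i_{s+1}$, and the Adem relation (v) rewrites $Sq^{i_s}_{C_2}Sq^{i_{s+1}}_{C_2}$ as a sum of terms $u^{\epsilon}Sq^{\,i_s+i_{s+1}-k}_{C_2}Sq^{k}_{C_2}$, together with further $a$-multiples in the odd case, where $0\le k\le[i_s/2]$. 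Introduce the moment $\mu(I)=\sum_{s}s\,i_s$. Replacing the pair at positions $(s,s+1)$ fixes all other entries and changes the local contribution $s\,i_s+(s+1)\,i_{s+1}$ to $s(i_s+i_{s+1})+k$; since $i_s<2i_{s+1}$ forces $k\le[i_s/2]<i_{s+1}$, the moment drops strictly. Because the Adem relation preserves total degree and $\mu$ is a nonnegative integer, this sets up a well-founded induction, and iterating reduces every monomial to an $H^{\star}(pt)$-combination of admissibles. The extra $u^{\epsilon}$ and $a$ factors do not interfere, as they only modify the coefficient, not the indices controlling $\mu$.

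For linear independence I would dualize. Hu and Kriz \cite{HuKriz1} identify the dual $C_2$-equivariant Steenrod algebra $H_{\star}(H\underline{\mathbb{Z}/2})$ as a free $H_{\star}(pt)$-module with a polynomial basis of monomials in motivic-type generators $\tau_i$ and $\xi_i$. Using the Milnor-style pairing $\langle-,-\rangle$ of \cite{MilA} between $\mathcal{A}_{C_2}$ and its dual, I would match admissible monomials $Sq^{I}_{C_2}$ against the dual monomials and argue that, with respect to the same ordering used above, this pairing is unitriangular: $Sq^{I}_{C_2}$ pairs with the dual monomial indexed by $I$ to a power of $u$ and with all strictly larger dual monomials to zero. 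A unitriangular pairing against a free basis of the dual then forces the admissibles to be $H^{\star}(pt)$-linearly independent. An alternative, more hands-on route is to evaluate the admissibles on products of the degree-$\sigma$ generator of $H^{\star}(RP^{\infty}_{tw})$ inside $H^{\star}\big((RP^{\infty}_{tw})^{N}\big)$ for large $N$, using that the total operation $\mathcal{S}q_t$ of \eqref{total} is a ring homomorphism together with the known Steenrod action on $H^{\star}(RP^{\infty}_{tw})$, and then running the classical excess bookkeeping equivariantly.

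The difficulty I expect is entirely in the coefficients. Since $H^{\star}(pt)$ contains the $u$-divisible negative cone and the $a$-power classes visible in Figure \ref{fig:pt}, the element $u$ is not a global non-zero-divisor, so independence cannot be read off from a dimension count over a field, and the ``leading coefficient'' in the pairing above is a power of $u$ rather than a literal unit. I would resolve this by a bidegree analysis: a homogeneous relation $\sum_{I}\lambda_{I}Sq^{I}_{C_2}=0$ fixes the bidegrees of the $\lambda_{I}\in H^{\star}(pt)$, and one shows these bidegrees confine each $\lambda_I$ to the positive cone $\mathbb{F}_2[u,a]$, on which $u$ acts injectively; there the unitriangular pairing does force every $\lambda_{I}=0$. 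Establishing this confinement—equivalently, checking that no negative-cone coefficient can survive in a relation among admissibles—is the step I expect to require the most care.
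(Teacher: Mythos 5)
Your spanning half is essentially what the paper intends (its entire proof is a one-line appeal to ``the Adem relations and the Cartan formula as in the classical case''), and your moment induction is correct, with one caveat you gloss over: when an Adem relation is applied to an inadmissible pair in the \emph{interior} of a word, the coefficient $u^{\epsilon}$ or $a$ lands in the middle of the monomial, and $H^{\star}(pt)$ is not central in $\mathcal{A}_{C_2}$ --- the Cartan formula gives $Sq^{1}_{C_2}\circ u = u\,Sq^{1}_{C_2}+a$, since $Sq^{1}_{C_2}(u)=a$. So ``the extra factors only modify the coefficient, not the indices'' is not literally true; pushing them to the left creates additional terms, which must be (and can be) absorbed by the induction because they involve shorter monomials.

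The genuine gap is in the independence half, specifically in your proposed cure for the non-field coefficients. The ``bidegree confinement'' claim is false: homogeneity does not force the $\lambda_{I}$ into the positive cone. For any admissible $I$ and any negative-cone class $z$, nothing degree-theoretic prevents a relation in degree $|z|+|Sq^{I}_{C_2}|$ whose $I$-th coefficient is $z$; excluding such relations is precisely the content of the lemma, so it cannot be an input. Fortunately, the obstacle you diagnosed is also not actually there: the diagonal entries of the admissible-versus-Milnor pairing are honest $1$'s, not powers of $u$. The Milnor monomial $\tau^{E}\xi^{R}$ matched to an admissible $Sq^{I}_{C_2}$ has $e_{j-1}\equiv i_{j}-2i_{j+1}\equiv i_{j} \pmod 2$, so the number of $\tau$'s equals the number of odd entries of $I$; a degree count then shows $|Sq^{I}_{C_2}|$ and $|\tau^{E}\xi^{R}|$ agree on the nose, so the pairing lies in $H^{0}_{C_2}(pt)\cong \mathbb{Z}/2$ and equals its nonequivariant image, which is $1$ by Milnor. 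Once the pairing is shown to be triangular with unit diagonal (triangularity follows Milnor's iterated-coproduct computation, since the coproduct formulas in the Hu--Kriz dual are identical to the classical ones), a relation $\sum_{I}\lambda_{I}Sq^{I}_{C_2}=0$ evaluated against the Milnor monomial matched to an extreme $I$ with $\lambda_{I}\neq 0$ forces $\lambda_{I}=0$ directly --- over the whole coefficient ring, negative cone included, with no confinement step at all. The same repair rescues your hands-on route (which is what the paper's ``Cartan formula'' reference amounts to): the leading coefficient of $Sq^{I}_{C_2}$ evaluated on a product of fundamental classes is $1$ because the top operation acts by squaring, by axiom (vi). So keep your architecture, which is considerably more careful than the paper's one-sentence proof, but replace the positive-cone reduction by the unit-diagonal computation; as written, that step does not merely need ``care'' --- it rests on an assertion that is false.
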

\begin{proof}
The proof follows from the Adem relations and the Cartan formula as in the classical case.
\end{proof}

For the graded $\mathcal{A}_{C_2}$-module structure and Hopf algebra structure of equivariant Steenrod algebra, one can look \cite{Nicolas2015}. We will now give unstable module structure of it.

\begin{definition}\label{ex} An $\mathcal{A}_{C_2}$-module is unstable if it satisfies the preceeding instability condition (vii). 
\end{definition}

We define the \textbf{excess} of $Sq^k_{C_2}$ to be the degree of $Sq^{k}_{C_2}$
$$e(Sq^k_{C_2})=\left\{
\begin{array}{ll}
      i\rho &  for \ k=2i\\
      i\rho+1 &  for \ k=2i+1.\\
\end{array} 
\right.$$
So, $e(Sq^k_{C_2})=|Sq^k_{C_2}|$. Then the \textbf{excess} of $Sq^{I}_{C_2}=Sq^{i_1}_{C_2}Sq^{i_2}_{C_2}\cdots Sq^{i_k}_{C_2}$ to be
$$e(Sq^{I}_{C_2})=\sum_{j}e(Sq^{i_j}_{C_2})-\rho e(Sq^{i_{j+1}}_{C_2})$$
where $\rho(r\sigma+s)=(r+s)\rho$.
\begin{examples}\
\begin{itemize}
\item The monomial with $e(Sq^{I}_{C_2})=0$ is $Sq^{0}_{C_2}$.
\item The monomials with $e(Sq^{I}_{C_2})=1$ are $Sq^{1}_{C_2}$, $Sq^2_{C_2}Sq^1_{C_2}$, $Sq^4_{C_2}Sq^2_{C_2}Sq^1_{C_2}$, $\cdots$
\item There is no monomial with $e(Sq^{I}_{C_2})=\sigma$.
\item The monomials with $e(Sq^{I}_{C_2})=2$ are $Sq^3_{C_2}Sq^1_{C_2}$, $Sq^6_{C_2}Sq^3_{C_2}Sq^1_{C_2}$, $Sq^{12}_{C_2}Sq^6_{C_2}Sq^3_{C_2}\-Sq^1_{C_2}$, $\cdots$
\item The monomials with $e(Sq^{I}_{C_2})=\rho$ are $Sq^{2}_{C_2}$, $Sq^4_{C_2}Sq^2_{C_2}$, $Sq^8_{C_2}Sq^4_{C_2}Sq^2_{C_2}$, $\cdots$
\item There is no monomial with $e(Sq^{I}_{C_2})=2\sigma$,
\item The monomials with $e(Sq^{I}_{C_2})=3$ are $Sq^7_{C_2}Sq^3_{C_2}Sq^1_{C_2}$, $Sq^{11}_{C_2}Sq^5_{C_2}Sq^2_{C_2}Sq^1_{C_2}$, $\cdots$
\item The monomials with $e(Sq^{I}_{C_2})=2+\sigma$ are $Sq^3_{C_2}$, $Sq^4_{C_2}Sq^1_{C_2}$, $Sq^5_{C_2}Sq^2_{C_2}$, $Sq^6_{C_2}Sq^3_{C_2}$, $Sq^6_{C_2}Sq^2_{C_2}Sq^1_{C_2}$, $Sq^8_{C_2}Sq^4_{C_2}Sq^1_{C_2}$, $\cdots$
\item There is no monomial with $e(Sq^{I}_{C_2})=1+2\sigma$.
\end{itemize}
\end{examples}

\begin{remark} There is no monomial with $e(Sq^{I}_{C_2})=r\sigma+s$  if $r>s$.
\end{remark}

Let $\mathbf{t}_{j, k}= Sq^{j2^{k-1}}_{C_2}\cdots Sq^j_{C_2}$. Then the set of elements with total excess 1 is $$\left\{ \mathbf{t}_{1, k_1} | k_1 > 0 \right\}.$$\\
The set of elements with total excess 2 is 
$$\left\{ \mathbf{t}_{1+2^{k_1}, k_2+1} \mathbf{t}_{1, k_1} | k_1, k_2 \geq 0 \right\}.$$\\
The set of elements with total excess 3 is 
$$\left\{ \mathbf{t}_{1+2^{k_2}+2^{k_1+k_2}, k_3+1} \mathbf{t}_{1+2^{k_1}, k_2} \mathbf{t}_{1, k_1} | k_1, k_2, k_3 \geq 0 \right\}.$$

The $C_2$-equivariant mod $2$ dual Steenrod algebra (one can check \cite{Nicolas2015}, or \cite{HuKriz1} for details) is
$$\mathcal{A}^{C_2}=M_2^{C_2}[\tau_i, \xi_i]/(\tau_i^2+a\tau_{i+1}\eta_{R}(u)\xi_{i+1})$$
such that
\begin{align*}
& \eta_R(u)= u+a\tau_0\\
& \eta_R(a)= a\\
& |\xi_i|= (2^i-1)\rho\\
& |\tau_i|= 1+|\xi_i|\\
& \Delta(\xi_i)=\sum_{j=0}^i\xi_{i-j}^{2^j}\otimes\xi_j, \ \text{where} \ \xi_0=1\\
& \Delta(\tau_i)=\tau_i\otimes 1+\sum_{j=0}^i\xi_{i-j}^{2^j}\otimes\tau_j.\\ 
\end{align*}

\end{section}

\begin{section}{Equivariant Eilenberg-Mac Lane Spaces}\label{s4}

For each Mackey functor $\underline{M}$, there is an Eilenberg-Mac Lane $G$-spectrum $H\underline{M}$ which has the property as Mackey functors
$$\underline{\pi}_n^G(H\underline{M})=\left\{
\begin{array}{ll}
      \underline{M} &  n=0\\
      0 &  n\in \mathbb{Z}, n\neq0\\
\end{array} 
\right.$$
One can check \cite[Chapter XIII, page 162]{May:Alaska} for the proof of the existence.

Let $\underline{M}$ be a Mackey functor, the $V$th space in the $\Omega$-spectrum for $H\underline{M}$
is called an equivariant Eilenberg-Mac Lane space of type $K(\underline{M},V )$, which is a classifying space for the functor $H^{V}_{G}(-; \underline{M})$. That is, given any real orthogonal representations $V$ , $W$, there is a $G$-homotopy equivalence $K(\underline{M}; V)\simeq \Omega^WK(\underline{M}, V+W)$
satisfying various compatibility properties. Such spaces are constructed in \cite{Lewis1992}, or one can look \cite{DSantos2003}  for a construction with a different method. Here, I will give the definition of them for consistency.
\begin{definition}\cite{Lewis1992}\label{EMS} Let $V$ be a real orthogonal representation with $|V^G|\geq 1$ and $\underline{M}$ be a Mackey functor. An equivariant Eilenberg-Mac Lane space $K(\underline{M}, V)$ is a based, $(|V^*|-1)$-connected $G$-space with the $G$-homotopy type of a $G$-CW complex such
that $\underline{\pi}_V^G(K(\underline{M}, V))=\underline{M}$, and for $\underline{\pi}_{V+k}^G(K(\underline{M}, V))=0$ $k\neq0$.
\end{definition}

\begin{remark} One can ask what $\underline{\pi}_{V+n\sigma}^G(K(\underline{M}, V))$ is for $n>0$. Our main interest is $K(\underline{Z/2}, V)$. Then, $$\underline{\pi}_{V+n\sigma}^{C_2}(K(\underline{Z/2}, V))(C_2/e)= \pi_{V+n\sigma}^{e}(K(\underline{Z/2}, V))=0$$ and
\begin{align*} 
\underline{\pi}_{V+n\sigma}^{C_2}(K(\underline{Z/2}, V))(C_2/C_2) &= \pi_{V+n\sigma}^{C_2}(K(\underline{Z/2}, V))  \\ 
 &\cong \tilde{H}_{V+n\sigma}^{C_2}(S^V; \underline{Z/2})\\
 &\cong \tilde{H}_{n\sigma}^{C_2}(S^{0, 0}; \underline{Z/2})\\
 &\cong H_{n\sigma}^{C_2}(\ast; \underline{Z/2})
\end{align*}
So, as a Mackey functor, the homotopy $\underline{\pi}_{V+n\sigma}^G(K(\underline{M}, V))$ is one of the
\begin{displaymath}
\xymatrix
@R=7mm
@C=10mm{
Z/2 \ar@/_/[d]_{}\\
0 \ar@(dr,dl)[]^{Id} \ar@/_/[u]_{}\\
}
\
\
\
\text{or}
\
\
\
\xymatrix
@R=7mm
@C=10mm{
0 \ar@/_/[d]_{}\\
0 \ar@(dr,dl)[]^{Id} \ar@/_/[u]_{}\\
}
\end{displaymath}
depending on the dimension of the representation $V$ and $n$.
\end{remark}

As mentioned before, one can check \cite{Lewis1992} for existence and some properties of these spaces.

Another approach to construct equivariant Eilenberg-Mac Lane spaces is Dos Santos \cite{DSantos2003} approach. As we know in the classical case, the free abelian group on the $n$-sphere is a model for the Eilenberg-Mac Lane space $K(\mathbb{Z},n)$, and the free $\mathbb{F}_2$-vector space on the $n$-sphere is a model for the Eilenberg-Mac Lane space $K(\mathbb{F}_2,n)$. Dos Santos constructed a topological abelian group $M\otimes X$ in \cite[Definition 2.1.]{DSantos2003}, which is the equivariant generalization of previous sentence for a Mackey functor $M$, and proved an $RO(G)$-graded version of equivariant Dold-Thom theorem proved by Lima-Filho for $\mathbb{Z}$-graded case in \cite{Lima-Filho1997}.

Let $M$ be a $\mathbb{Z}[G]$-module, $\underline{M}$ be the Mackey functor associated to $M$: the value of $\underline{M}$ on $G/H$ is $M^H$ and the value on the
projection $G/K\longrightarrow G/H$, for $K < H < G$, is the inclusion of $M^H\hookrightarrow M^K$. We define $M\otimes X$ as the $\mathbb{Z}[G]$-module with a topology as follows(\cite[Definition 2.1.]{DSantos2003}):
Let $(X,\ast)$ be a based $G$-set, $M\otimes X$ denote the $\mathbb{Z}[G]$-module $\bigoplus_{x\in X-\{\ast\}}M$. The action  of $g\in G$ is given by $(g.m)_x=g.m_{g^{-1}.x}$, where $m_x$ denotes the $x$th coordinate of $m\in \bigoplus_{x\in X-\{\ast\}}M$. Given $(X,\ast)$ a based $G$-space, $M\otimes X$ can be equivalently defined as the quotient 
$$M\otimes X= \amalg_{n\geq0} M^n\times X^n / \backsim ,$$
where $\backsim$ is the equivalence relation generated by:
\begin{enumerate}
\item[(i)] $(r,\phi^{\ast}x)\backsim (\phi_{\ast}r,x)$, for each based map $\phi: \{0, \cdots, n\}\longrightarrow \{0, \cdots, m\}$, $n, m\in \mathbb{N}$, where $\phi^{\ast}x=x\circ \phi$, and $(\phi_{\ast}r)_i=\sum_{k\in \phi^{-1}(i)}r_k$.
\item[(ii)] $((r,r^{\prime}), (x, \ast))\backsim (r, x)$, for each $r\in M^n$, $r^{\prime}\in M$, $x\in X$.
\end{enumerate}
We give the discrete topology to $M$ and endow $M\otimes X$ with the quotient
topology corresponding to the relation $\backsim$.

We can define Eilenberg-Mac Lane spaces as $K_V= M\otimes S^V$. In our case,
$$K_{m+n\sigma}= \mathbb{Z}/2\otimes S^{m+n\sigma}.$$

\begin{theorem}\cite{DSantos2003} Let $X$ be a based $G$-CW-complex and let $V$ be a finite dimensional
$G$-representation, then $M\otimes X$ is an equivariant infinite loop space and there is a
natural equivalence $$\pi_V^G(M\otimes X)\cong \tilde{H}_V^G(X; \underline{M})$$
\end{theorem}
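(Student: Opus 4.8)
The plan is to follow the strategy of the classical Dold--Thom theorem, as adapted equivariantly by Lima-Filho and Dos Santos. Two things must be established: that $M\otimes X$ is an equivariant infinite loop space, and that $\tilde{\pi}_{\star}^G(M\otimes -)$ is naturally isomorphic to $RO(G)$-graded ordinary homology with $\underline{M}$ coefficients. Since $M\otimes X$ is by construction a topological abelian group on which $G$ acts through group automorphisms, the first assertion follows once the functor is ``linear'' in the appropriate sense: the bar construction on the topological abelian $G$-group $M\otimes X$ produces an $\Omega$-$G$-spectrum, and the $RO(G)$-graded deloopings come from the suspension equivalences $M\otimes X\simeq \Omega^W(M\otimes (S^W\wedge X))$ obtained below.

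The heart of the argument is an equivariant quasifibration result. First I would show that $M\otimes(-)$ is homotopy invariant and carries a based $G$-cofibration $A\hookrightarrow X$ to an equivariant quasifibration
$$M\otimes A\longrightarrow M\otimes X\longrightarrow M\otimes(X/A),$$
meaning the sequence is a quasifibration on each fixed-point space $(-)^H$ for $H<G$. To verify this I would analyze $(M\otimes X)^H$ through the filtration $\amalg_n M^n\times X^n/\!\sim$ by number of points: an $H$-fixed configuration decomposes into $H$-orbits of points of $X$ with labels lying in the corresponding fixed submodules $M^K$, and the Dold--Thom quasifibration criterion can then be checked stratum by stratum as in the nonequivariant case. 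Granting this, the long exact sequence of a quasifibration yields exactness for $\tilde{\pi}_{\star}^G(M\otimes -)$, and the Puppe continuation of $A\to X\to X/A$ produces the suspension isomorphism, hence the delooping equivalences used above.

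With exactness, homotopy invariance, the wedge axiom, and suspension in hand, $\tilde{\pi}_{V}^G(M\otimes -)$ is an $RO(G)$-graded equivariant homology theory, and it remains only to identify its coefficients. A direct fixed-point computation gives $(M\otimes S^0)^H = M^H$, so $\underline{\pi}_0^G(M\otimes S^0)$ is exactly the Mackey functor $\underline{M}$, with restrictions and transfers matching those of $\underline{M}$, while $\underline{\pi}_n^G(M\otimes S^0)=0$ for $n\neq 0$ since a discrete group carries no higher homotopy. By the uniqueness of ordinary $RO(G)$-graded homology with prescribed Mackey-functor coefficients in the Lewis--May--McClure framework, the natural transformation to $\tilde{H}_{\star}^G(-;\underline{M})$ determined by this coefficient identification is an isomorphism, giving $\pi_V^G(M\otimes X)\cong \tilde{H}_V^G(X;\underline{M})$.

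The main obstacle is the equivariant quasifibration step. Already nonequivariantly Dold--Thom's lemma is delicate; equivariantly one must control the fixed-point spaces $(M\otimes X)^H$, whose stratification mixes orbits of differing isotropy with labels constrained to the submodules $M^K$, and verify that the quasifibration criterion holds uniformly across all fixed points. Once this structural input is secured, the remaining steps are formal consequences of the axioms for equivariant homology together with the uniqueness theorem.
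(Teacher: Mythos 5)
The paper does not prove this statement at all: it is quoted verbatim from Dos Santos \cite{DSantos2003} (building on Lima-Filho's $\mathbb{Z}$-graded equivariant Dold--Thom theorem), so there is no internal proof to compare against. Your sketch follows essentially the same route as that cited source: the fixed-point analysis of $M\otimes X$ as labelled configurations with labels in $M^{K}$, the equivariant quasifibration lemma for cofibrations $A\hookrightarrow X$, the resulting $RO(G)$-graded deloopings $M\otimes X\simeq\Omega^{W}(M\otimes(S^{W}\wedge X))$, and the identification of the coefficient Mackey functor followed by Lewis--May--McClure uniqueness. The one step you defer --- the equivariant quasifibration property checked on all fixed-point spaces --- is exactly the technical heart of Lima-Filho's theorem that Dos Santos invokes, and the one assertion you pass over quickly, that the transfers on $\underline{\pi}_{0}^{G}(M\otimes S^{0})$ agree with the sum-over-cosets transfers of $\underline{M}$, is a genuine (if standard) verification requiring the stable structure, not merely the fixed-point computation $(M\otimes S^{0})^{H}=M^{H}$; neither point undermines the outline, which is a correct reconstruction of the argument in the literature.
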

As a corollary to this theorem we have that $M\otimes S^V$ is a $K(\underline{M}, V)$ space (as Definition \ref{EMS}). Thus
we have a simple model for the equivariant Eilenberg-Mac Lane spectrum $H\underline{M}$.

\begin{examples}\
\begin{itemize}
\item[(i)] $K(\underline{Z/2}, 1)$ is $RP^{\infty}$, with trivial action.
\item[(ii)] Recall that $RP^{\infty}_{tw}=\mathbb{P}(\mathcal{U})$ is the space of lines in the
complete universe (Definition \ref{Uni})
$$\mathcal{U}=(\mathbb{R}^{\rho})^{\infty}$$
\cite{May:Alaska}. The cohomology of $RP^{\infty}_{tw}$ is calculated by Kronholm in \cite{WKronholm2010}. The space $RP^{\infty}_{tw}$ is equivalent to $K(\underline{Z/2}, \sigma)$, since it is  equivalent to $\mathbb{Z}/2\otimes S^{\sigma}$.   
\end{itemize}
\end{examples}
\begin{theorem}\cite{W2010} $H^{\star}(RP^{\infty}_{tw})\cong H^{\star}(pt)[c, d]/(c^2=ac+ud)$, where deg($c$)=$\sigma$, and deg($d$)=$\rho$.
\end{theorem}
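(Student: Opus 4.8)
\section*{Proof proposal}

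The plan is to realize $RP^{\infty}_{tw}=\mathbb{P}(\mathcal{U})$ as an increasing union of finite twisted projective spaces carrying a single representation cell in each relevant degree, to read off the $M_2^{C_2}$-module structure from the resulting cofiber sequences, and finally to pin down the one multiplicative relation by naturality. First I would filter $\mathcal{U}=(\mathbb{R}^{\rho})^{\infty}$ by subrepresentations $U_0\subset U_1\subset\cdots$ with $\dim U_k=k+1$, obtained by adjoining a sign line and a trivial line alternately, so that $U_0=\mathbb{R}$, $U_1=\rho$, and in general $U_{2m}=(m+1)+m\sigma$ and $U_{2m+1}=(m+1)\rho$. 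Writing $P_k=\mathbb{P}(U_k)$ and $L_k=U_k/U_{k-1}$, the standard identification of the complement of $P_{k-1}$ in $P_k$ with $\mathrm{Hom}(L_k,U_{k-1})\cong U_{k-1}\otimes L_k$ gives cofiber sequences $P_{k-1,+}\to P_{k,+}\to S^{V_k}$ with $V_k=U_{k-1}\otimes L_k$. A one-line computation then yields $V_{2m}=m\rho$ and $V_{2m+1}=\sigma+m\rho$, so $RP^{\infty}_{tw}$ has exactly one representation cell in each degree $m\rho$ and $\sigma+m\rho$, $m\ge 0$.

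Next I would feed these cofiber sequences into $RO(C_2)$-graded cohomology. Each $\widetilde{H}^{\star}(S^{V_k})=\Sigma^{V_k}M_2^{C_2}$ is free on a Thom class, and I would argue by induction on $k$ that all connecting homomorphisms vanish, so that $H^{\star}(P_k)$ is free on the cell classes and the inclusions are surjective. Passing to the colimit, $H^{\star}(RP^{\infty}_{tw})$ is the free $M_2^{C_2}$-module on generators in degrees $\{m\rho\}\cup\{\sigma+m\rho\}$. I would then set $c\in H^{\sigma}$ to be the class of the $V_1$-cell (equivalently the Euler class of the tautological line bundle, whose fiber is the sign line) and $d\in H^{\rho}$ the class of the $V_2$-cell; since $d^m$ and $cd^m$ sit in degrees $m\rho$ and $\sigma+m\rho$ and restrict to the corresponding cell generators, they form an $M_2^{C_2}$-basis, so $c,d$ generate the ring and the only relation in lowest degree expresses $c^2$.

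To fix that relation, note $c^2\in H^{2\sigma}$, a group with $M_2^{C_2}$-basis $\{a^2\cdot 1,\,ac,\,ud\}$ coming from the cells $V_0,V_1,V_2$ (here $a^2,a,u$ are the elements of $M_2^{C_2}$ of degrees $2\sigma,\sigma,\sigma-1$). Write $c^2=\alpha\,a^2+\beta\,ac+\gamma\,ud$. Restriction along the forgetful map to the underlying space $RP^{\infty}$, where $H^{*}(RP^{\infty};\mathbb{F}_2)=\mathbb{F}_2[x]$ with $|x|=1$, sends $a\mapsto 0$, $u\mapsto 1$, $c\mapsto x$ and (by naturality of the filtration) $d\mapsto x^2$; comparing $x^2=\gamma x^2$ forces $\gamma=1$. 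To separate the two $a$-divisible terms I would restrict instead along $P_1=\mathbb{P}(\rho)\cong S^{\sigma}\hookrightarrow RP^{\infty}_{tw}$, which kills $d$ and keeps $1,c$. In $\widetilde{H}^{\star}(S^{\sigma})$ the Thom class squares to $\iota^2=e(\sigma)\,\iota=a\,\iota$ (Thom class squared equals Euler class times Thom class, and $e(\sigma)=a$); since $a^2\cdot 1$ and $a\iota$ are independent in $H^{2\sigma}(S^{\sigma})$, this gives $\alpha=0$ and $\beta=1$. Hence $c^2=ac+ud$ and $H^{\star}(RP^{\infty}_{tw})\cong M_2^{C_2}[c,d]/(c^2=ac+ud)$.

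The main obstacle is the freeness step: a priori the cell cofiber sequences could be linked by the Euler class $a$, and proving that every connecting map vanishes (so that the module is genuinely free on the cell classes) is exactly the delicate point, the content of Kronholm's freeness theorem for representation complexes. The secondary subtlety is the coefficient $\beta$: the underlying restriction only detects $\gamma$, so the $ac$-term must be extracted from the Thom-class square on the bottom cell $S^{\sigma}$ rather than from any nonequivariant comparison.
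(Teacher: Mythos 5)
The paper does not actually prove this theorem: it is quoted from Kronholm's work (the citation [W2010]), so there is no internal proof to compare you against. Your proposal is, in spirit, a reconstruction of the cellular computation in that cited source: filter $\mathbb{P}(\mathcal{U})$ by finite projective spaces, use the cofiber sequences $P_{k-1,+}\to P_{k,+}\to S^{V_k}$ (your identification $V_{2m}=m\rho$, $V_{2m+1}=\sigma+m\rho$ is correct), prove the module is free on the cell classes, and pin the relation by naturality. Your endgame is correct and complete as stated: $\gamma=1$ comes from the forgetful map to $RP^{\infty}$, and $\alpha=0$, $\beta=1$ from restriction to $P_1\cong S^{\sigma}$ using $\iota^2=a\iota$; the groups you use there are right, namely $H^{2\sigma}(RP^{\infty}_{tw})$ has basis $\{a^2\cdot 1, ac, ud\}$ and $H^{\rho}(S^{\sigma})=0$, so $d$ really does die on $P_1$.

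The genuine soft spot is the freeness induction, and your diagnosis of where the danger sits is not quite right. The connecting homomorphism is $M_2^{C_2}$-linear, so on a generator $g_{V_j}$ it lands in $H^{V_j+1-V_k}(pt)$. If you run through the four parities of $(j,k)$ you find that $V_j+1-V_k$ lies in the gap between the two cones of $M_2^{C_2}$ in every case except one: the adjacent pair $V_{2m-1}=\sigma+(m-1)\rho$, $V_{2m}=m\rho$, where $V_j+1-V_k=0$ and the target is $H^{0}(pt)=\mathbb{F}_2$. In particular no component "linked by the Euler class $a$" can occur at all — the same degree check rules it out — so the only possible obstruction is a unit-coefficient differential between adjacent cells, which is exactly the equivariant shadow of the classical attaching map of the even cell of $RP^{2m}$ (degree $2$). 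That is also how you kill it: the forgetful map commutes with connecting homomorphisms, is an isomorphism on $H^{0}(pt)$, sends $g_{V_{2m-1}}$ to $x^{2m-1}$, and the nonequivariant connecting map for $RP^{2m-1}\subset RP^{2m}$ vanishes mod $2$. With this supplement your induction closes, and the colimit step is fine since the restriction maps are surjective. Two further cautions: invoking Kronholm's freeness theorem as a black box is not a substitute here, because that theorem yields freeness with possibly \emph{shifted} generator degrees, so you still need the degree bookkeeping above to know the generators sit in the cell degrees; and your assertion that $d^m$, $cd^m$ "restrict to the corresponding cell generators" needs the same forgetful-map detection (in degree $m\rho$ no lower cell has underlying dimension $2m$, so $\Phi^e$ sees only the top coefficient), which is easy but is currently missing from the write-up.
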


Now, we will give a structure of fixed points of equivariant Eilenberg-Mac Lane spaces, which is useful to calculate the cohomology of them. 
\begin{theorem}\label{1}\cite[Corollary 10]{JLCaruso1999}
\begin{itemize}
\item[(i)]$(K(\underline{Z/2}, r\sigma+s))^e\simeq K(Z/2, r+s)$.
\item[(ii)]$(K(\underline{Z/2}, r\sigma+s))^{C_2}\simeq K(Z/2, s)\times\cdots \times K(Z/2, r+s)$.
\end{itemize}
\end{theorem}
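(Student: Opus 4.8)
The plan is to work with the Dos Santos model $K_{r\sigma+s}=\mathbb{Z}/2\otimes S^{r\sigma+s}$ and to reduce both statements to a homotopy-group computation. The crucial structural observation is that $\mathbb{Z}/2\otimes S^{r\sigma+s}$ is a topological abelian group on which $C_2$ acts through continuous group automorphisms, so both $(\mathbb{Z}/2\otimes S^{r\sigma+s})^e$ and $(\mathbb{Z}/2\otimes S^{r\sigma+s})^{C_2}$ are again topological abelian groups of the homotopy type of a CW complex (being the fixed points of a $G$-CW complex). A topological abelian group of CW type is weakly equivalent to the product of Eilenberg--Mac Lane spaces $\prod_n K(\pi_n,n)$ built on its homotopy groups; equivalently, all $k$-invariants vanish because the associated spectrum is an $H\mathbb{Z}$-module. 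Thus it suffices to compute the homotopy groups of each fixed-point space and to recognize them as those of the claimed products.

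For (i) I would observe that restricting the $C_2$-action to the trivial subgroup simply forgets it, and that the underlying space of $S^{r\sigma+s}$ is $S^{r+s}$; since $\mathbb{Z}/2\otimes(-)$ commutes with passage to the underlying space, $(K_{r\sigma+s})^e\simeq \mathbb{Z}/2\otimes S^{r+s}$, which is $K(Z/2,r+s)$ by the classical Dold--Thom theorem. Alternatively, $\pi_n((K_{r\sigma+s})^e)=\tilde H^{\,r+s}(S^n;\mathbb{Z}/2)$ is $\mathbb{Z}/2$ for $n=r+s$ and $0$ otherwise, which by the first paragraph again forces $(K_{r\sigma+s})^e\simeq K(Z/2,r+s)$.

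For (ii) the homotopy groups are computed by representability. A based $C_2$-map out of $S^n$ with trivial action lands in the fixed points, so
\[
\pi_n\big((K_{r\sigma+s})^{C_2}\big)=[S^n,K_{r\sigma+s}]^{C_2}_*=\tilde H^{\,r\sigma+s}_{C_2}(S^n;\underline{Z/2})=H^{\,r\sigma+(s-n)}_{C_2}(pt;\underline{Z/2}),
\]
i.e. the value of $M^{C_2}_2$ in degree $r\sigma+(s-n)$. Reading this off the positive cone $M^{C_2}_2\supset \mathbb{Z}/2[u,a]$ with $|u|=\sigma-1$ and $|a|=\sigma$ (Figure \ref{fig:pt}): in degree $r\sigma+t$ the only possible monomial is $u^{-t}a^{\,r+t}$, present exactly when $-r\le t\le 0$; since $r\ge 0$ the $\sigma$-weight is non-negative, so the entire negative cone, which sits in strictly negative $\sigma$-weight, contributes nothing. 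Writing $t=s-n$, this gives $\pi_n=\mathbb{Z}/2$ precisely for $s\le n\le r+s$ and $0$ otherwise --- exactly the homotopy groups of $K(Z/2,s)\times\cdots\times K(Z/2,r+s)$, which finishes the identification.

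The \textbf{main obstacle} is twofold. First, one must be sure the fixed-point space genuinely splits as a \emph{product} rather than merely having the right homotopy groups; this is precisely where the topological-abelian-group structure (equivalently, the $H\mathbb{Z}$-module/GEM structure) is indispensable, since it forces every $k$-invariant to vanish. Second, one must correctly separate the positive and negative cones of $M^{C_2}_2$ and verify that, for the relevant non-negative $\sigma$-weight $r$, no negative-cone class interferes; careless bookkeeping here would corrupt the range $s\le n\le r+s$. A more hands-on alternative --- analyzing the $C_2$-fixed labelled configurations in $\mathbb{Z}/2\otimes S^{r\sigma+s}$ through genuine fixed points and transfers, or computing the relevant Bredon homology directly --- reaches the same answer, but it is messier than the homotopy-group argument above.
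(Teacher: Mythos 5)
Your proposal is correct, but there is nothing in the paper to compare it against: the paper does not prove this theorem at all, it quotes it verbatim from \cite[Corollary 10]{JLCaruso1999}. So your argument supplies a proof the paper omits, and it does so using only the paper's own toolkit: the Dos Santos model $K_{r\sigma+s}=\mathbb{Z}/2\otimes S^{r\sigma+s}$ from Section \ref{s4} and the structure of $M_2^{C_2}$ from Figure \ref{fig:pt}. Both pillars of your argument check out. First, the $C_2$-action on $\mathbb{Z}/2\otimes S^{r\sigma+s}$ permutes the coordinates of $\bigoplus_{x\in S^{r\sigma+s}\setminus\{\ast\}}\mathbb{Z}/2$, hence acts by continuous group automorphisms, so both fixed-point spaces are topological abelian groups of CW homotopy type (fixed points of $G$-CW complexes are CW); Moore's splitting theorem then identifies each, up to weak equivalence --- upgraded to homotopy equivalence by Whitehead --- with the product of Eilenberg-Mac Lane spaces on its homotopy groups. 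This is exactly the step that converts ``correct homotopy groups'' into the product decomposition asserted in (ii), and you rightly flag it as the crux. Second, the homotopy-group computation is right: $\pi_n\bigl((K_{r\sigma+s})^{C_2}\bigr)=[S^n,K_{r\sigma+s}]^{C_2}_{\ast}=\tilde H^{r\sigma+s}_{C_2}(S^n;\underline{Z/2})=H^{r\sigma+(s-n)}_{C_2}(pt;\underline{Z/2})$ by representability and suspension, and your cone bookkeeping in $M_2^{C_2}$ is accurate: the negative cone consists of the classes $\theta/(a^iu^j)$ with $i,j\geq 1$, whose $\sigma$-weight is $-(i+j)\leq -2$, so in $\sigma$-weight $r\geq 0$ only the positive cone can contribute, namely $u^{n-s}a^{r+s-n}$ precisely when $s\leq n\leq r+s$; this matches the $r+1$ factors $K(Z/2,s),\dots,K(Z/2,r+s)$. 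Part (i) is immediate from Dold--Thom, since $M\otimes(-)$ commutes with forgetting the action, as you note. If this were inserted into the paper as an actual proof, the only polish needed would be an explicit citation for Moore's theorem (every topological abelian group of CW type is a product of Eilenberg-Mac Lane spaces) and a remark that $G$-homotopy equivalences induce homotopy equivalences on fixed points, so the conclusion is independent of the chosen model of $K(\underline{Z/2},r\sigma+s)$. Relative to the cited source, whose argument likewise exploits abelian-group models and the Bredon cohomology of a point, what your write-up buys is self-containedness rather than a genuinely new method.
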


\end{section}

\begin{section}{Cohomology of Eilenberg-Mac Lane Spaces}\label{s5}

In classical case the cohomology of Eilenberg- Mac Lane spaces $K_n$ with $\mathbb{Z}/2$-coefficients, which is given by Serre in \cite{Serre} is a polynomial ring
$$H^{\ast}(K_n ; \mathbb{Z}/2)= P(Sq^{I}(\iota_n)| e(I)<n)$$
where $I$ are admissible sequences, $\iota_n$ is the fundamental class, and $e(Sq^{I})=\sum_{j}(i_j-2i_{j+1})$. We thought that we can give similar description for $RO(C_2)-$graded $C_2$-equivariant cohomology of $C_2$-equivariant Eilenberg-Mac Lane spaces, but these are more complicated than we expect.

Let $\mathbf{s}_{V, l}$ is the operation that sends $x$ to $x^{2^l}$ for $x \in H^V$. It is possible to express $\mathbf{s}_{V, l}$ as a linear combination of Steenrod operations.
$$\mathbf{s}_{V, 0}=1$$
If $x\in H^{a+b\sigma}$, and $b= r_1+ \lfloor \frac{a+b}{2} \rfloor$, then $(u^{-r_1}x)^2=Sq_{C_2}^{a+b}(u^{-r_1}x)$, so
$$x^2=u^{2r_1}Sq_{C_2}^{a+b}(u^{-r_1}x)$$
By using $C_2$-equivariant Cartan formula and the formula \ref{act2} 
$$
Sq^{2m+\delta}_{C_2}(u^{2l+\epsilon})= \binom{2l+\epsilon}{2m+\delta} u^{2l+\epsilon-m-\delta}a^{2m+\delta}
$$
one has general formula for $Sq_{C_2}^{a+b}(u^{-r_1}x)$.
By iterating this method one can find a formula for every $x^{2^l}$, so $\mathbf{s}_{V,l}$ exist. For example, if $x\in H^{3+\sigma}$, then
\begin{align*}
(ux)^2=Sq^4_{C_2}(ux)&=\sum_{r=0}^{2}Sq^{2r}_{C_2}(u)Sq^{4-2r}_{C_2}(x)+\sum_{s=0}^{1}Sq^{2s+1}_{C_2}(u)Sq^{3-2s}_{C_2}(x)\\
&=uSq^4_{C_2}(x)+uaSq^3_{C_2}(x)
\end{align*}

Thus
$$x^2=u^{-1}Sq^4_{C_2}(x)+u^{-1}aSq^{3}_{C_2}(x).$$

The set of elements $x_i$ whose finite distinct products form a basis for a graded ring $A$ is called a \textbf{simple
system of generators}. For example, a polynomial algebra $k[x]$ has a simple system of generators $\left\{ x^{2^i} | \ i\geq 0 \right\}$.
\begin{theorem}\label{Bor}(Borel) Let $F\rightarrow E\rightarrow B$ be a $C_2$-fibration with $E$ contractible. Suppose that $H^{\star}(F)$ has a simple system $\{x_i\}$ of transgressive generators. Then $H^{\star}(B)$ is a polynomial ring in the $\left\{\Sigma(x_i) \right\}$.
\end{theorem}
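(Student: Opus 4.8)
The plan is to run the $RO(C_2)$-graded Serre spectral sequence of Kronholm \cite[Theorem 1.2.]{WKronholm2010-2} for the fibration $F\rightarrow E\rightarrow B$ and to exploit the contractibility of $E$ exactly as Borel does in the nonequivariant setting. Since $E$ is contractible, the spectral sequence converges to $H^{\star}(E)=H^{\star}(pt)=M_2^{C_2}$. The first step is to identify the $E_2$-page. Assuming $H^{\star}(F)$ is free as an $M_2^{C_2}$-module---which is the hypothesis under which Kronholm's theorem produces an untwisted $E_2$-term---we obtain
$$E_2^{\star,\star}\cong H^{\star}(B)\otimes_{M_2^{C_2}} H^{\star}(F),$$
and, because $\{x_i\}$ is a simple system of generators, $H^{\star}(F)$ has an $M_2^{C_2}$-basis consisting of the finite products of distinct $x_i$; thus as a module it agrees with the exterior $M_2^{C_2}$-algebra $\Lambda(x_i)$ on the simple-system generators.

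Second, I would use the transgressiveness hypothesis: each $x_i$ survives to its transgression differential, which carries it to the class $\Sigma(x_i)\in H^{\star}(B)$. Multiplicativity of the spectral sequence then determines the differentials on the remaining basis elements by the Leibniz rule, so that the whole spectral sequence is realized as an equivariant Koszul-type complex
$$H^{\star}(B)\otimes_{M_2^{C_2}}\Lambda(x_i),\qquad d(x_i)=\Sigma(x_i).$$
Contractibility of $E$ forces this complex to have homology concentrated in $M_2^{C_2}$, which is precisely the condition that the classes $\Sigma(x_i)$ be algebraically independent and generate $H^{\star}(B)$, i.e. that $H^{\star}(B)=M_2^{C_2}[\Sigma(x_i)]$. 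The passage from the exterior simple-system structure in the fiber to a genuine polynomial structure in the base is governed by the squaring axiom (vi), $Sq^{2k}_{C_2}(x)=x^2$ in degree $k\sigma+k$, together with the operations $\mathbf{s}_{V,l}$ recorded above, which express every power $x_i^{2^l}$ as a combination of $C_2$-equivariant Steenrod squares. Combined with an equivariant Kudo transgression statement ($Sq^{k}_{C_2}$ commuting with transgression), this shows that the transgression of each higher simple-system generator $x_i^{2^l}$ is a Steenrod operation applied to $\Sigma(x_i)$, which is exactly the next polynomial generator of $H^{\star}(B)$.

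To complete the argument I would run a Zeeman-type comparison (or an inductive, degree-by-degree count using the freeness of $H^{\star}(F)$ over $M_2^{C_2}$) between the actual spectral sequence and the Koszul model built on $M_2^{C_2}[\Sigma(x_i)]$: the model is acyclic with abutment $M_2^{C_2}$, and any discrepancy in $H^{\star}(B)$---either a relation among the $\Sigma(x_i)$ or an extra class---would leave surviving classes in positive total degree, contradicting the contractibility of $E$.

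The hardest part will be the two places where the non-field ground ring $M_2^{C_2}$ intervenes. First, Kronholm's spectral sequence only has the clean tensor-product $E_2$-page when $H^{\star}(F)$ is free over $M_2^{C_2}$; verifying this and controlling the negative cone of $M_2^{C_2}$ visible in Figure \ref{fig:pt} is delicate, since $a$-torsion and $u$-divisibility can obstruct freeness and can create differentials mixing the two cones. Second, the acyclicity of the equivariant Koszul complex over $M_2^{C_2}$ and the comparison argument bounding $H^{\star}(B)$ both require that multiplication by the Euler class $a$ and the orientation class $u$ act predictably on the transgressions; this is supplied in principle by Lemma \ref{lem1} and the action \ref{act2}, but assembling it into a clean acyclicity statement over a ring as intricate as $M_2^{C_2}$ is the main obstacle.
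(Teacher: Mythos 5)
Your proposal is correct and takes essentially the same route as the paper: the paper's entire proof consists of invoking Kronholm's $RO(C_2)$-graded Serre spectral sequence, noting that its $E_2$-page depends only on the total degree of the representations, and asserting that the argument is then ``completely the same as the classical case'' (with a citation to the classical reference), and your transgression/Koszul-complex/comparison outline is exactly that classical argument written out, with the freeness of $H^{\star}(F)$ over $M_2^{C_2}$ correctly identified as the point that makes the tensor-product $E_2$-page and the counting argument go through. The extra material on Kudo transgression and the operations $\mathbf{s}_{V,l}$ is not needed for the theorem itself---it belongs to the later applications computing $H^{\star}(K_{n+\sigma})$---but including it does no harm.
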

$E_2$-page of $RO(G)$-graded Serre spectral sequence of Kronholm \cite{WKronholm2010-2} depends only on the total degree of representations, not the dimension of twisted part. The proof of the theorem is completely same as the classical case. See, for example, \cite[Page 88, Theorem 1]{MR0226634}.

A simple system of generators for $H^{\star}(K_{\sigma})\cong H^{\star}(pt)[c, d]/(c^2=ac+ud)$ is $$\left\{ c, d^{2^l} | l\geq 0 \right\}=\left\{c, \mathbf{s}_{1+\sigma,l}(d)| l\geq 0 \right\}$$

By applying the Borel Theorem to the path space fibration
$$K_{\sigma}\rightarrow P(K_{\rho})\rightarrow K_{\rho}$$
we have
$$ H^{\star}(K_{\rho})=P(x_{\rho}, \mathbf{s}_{\rho,l}(x_{1+\rho}) | l\geq 0).$$

A simple system of generator for $H^{\star}(K_{\rho})$ is
$$\left\{ x_{\rho}^{2^j}, (\mathbf{s}_{\rho,l}(x_{1+\rho}))^{2^j} | j,l \geq 0 \right\}=\left\{ \mathbf{s}_{\rho, j}(x_{\rho}), \mathbf{s}_{2^l\rho+1,j}\mathbf{s}_{\rho,l}(x_{1+\rho}) | j, l\geq 0 \right\}$$
where $|\mathbf{s}_{\rho,l}(x_{1+\rho})|= 2^l\rho+1$. Then,
$$ H^{\star}(K_{1+\rho})=P(\mathbf{s}_{\rho,j}(x_{1+\rho}),  \mathbf{s}_{2^l\rho+1, j} \mathbf{s}_{\rho, l}(x_{2+\rho}) | j, l\geq 0).$$

A simple system of generators for $H^{\star}(K_{1+\rho})$ is
\begin{align*}
& \left\{ (\mathbf{s}_{\rho, j} x_{1+\rho})^{2^k}, (\mathbf{s}_{2^j\rho+1, j} \mathbf{s}_{\rho,l} x_{2+\rho})^{2^k} | j,l, k \geq 0 \right\}\\
& =\left\{ \mathbf{s}_{2^j\rho+1, k} \mathbf{s}_{\rho, j} x_{2+\sigma}, \mathbf{s}_{2^j(2^l\rho+1)+1, k} \mathbf{s}_{2^l\rho+1,j} \mathbf{s}_{\rho,l} x_{2+\rho} | j, l, k \geq 0 \right\}
\end{align*}
where $|\mathbf{s}_{2^l\rho+1, j} \mathbf{s}_{\rho,l} x_{2+\rho}|= 2^j(2^l\rho+1)+1$. Then,
$$ H^{\star}(K_{2+\rho})=P( \mathbf{s}_{2^j\rho+1, k} \mathbf{s}_{\rho, j} x_{2+\rho}, \mathbf{s}_{2^j(2^l\rho+1)+1, k} \mathbf{s}_{2^l\rho+1,j}\mathbf{s}_{\rho,l} x_{3+\rho} | j, l, k \geq 0).$$
Thus, by iterating this process, one can find the $RO(C_2)$-graded cohomology of $C_2$-equivariant Eilenberg-Mac Lane spaces $K_{n+\sigma}$
$$H^{\star}(K_{n+\sigma})$$
for $n\geq 0$.
\begin{conjecture}[]{}
We know that
\begin{align*}
&H^{\ast}(K_1)=P(x_1)\\
&H^{\ast}(K_{\sigma})=P(x_{\sigma}, x_{1+\sigma})/(x_{\sigma}^{2}+ax_{\sigma}+ux_{1+\sigma}).\\
\end{align*}
If we knew $H^{\star}(K_{1+k(\sigma-1)})$ for all $k\geq 0$, then we could use the Borel theorem to find $H^{\star}(K_{1+m+k(\sigma-1)})$ for $m\geq 0$.

We conjectured that $H^{\star}(K_{1+k(\sigma-1)})$ is a polynomial algebra on $k+1$ generators with $k$ relations, saying that the square of each of the first k generators is a linear combination of the other generators. The dimensions of the first $k$ generators of the $H^{\star}(K_{1+k(\sigma-1)})$ are obtained by adding $\sigma-1$ to those of the generators of the $H^{\star}(K_{1+(k-1)(\sigma-1)})$, and the dimension of the last generator is $k\sigma+2^k-k$.
\end{conjecture}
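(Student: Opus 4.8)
The plan is to induct on $k$, taking the two computed cases $H^{\star}(K_1)=P(x_1)$ and $H^{\star}(K_{\sigma})=P(x_{\sigma},x_{1+\sigma})/(x_{\sigma}^{2}+ax_{\sigma}+ux_{1+\sigma})$ as the base ($k=0,1$) and producing the level-$k$ answer from the level-$(k-1)$ answer. The invariant I would carry through the induction is the underlying reduction: by Theorem \ref{1}(i) with $r=k$, $s=1-k$ the underlying space is always $(K_{1+k(\sigma-1)})^{e}\simeq K(\mathbb{Z}/2,1)=RP^{\infty}$, so every one of these rings must collapse to $\mathbb{Z}/2[y_1]$ after forgetting the action. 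The conjectured presentation is exactly a height-$(k+1)$ tower $x_{(1)}^{2}\equiv x_{(2)},\dots,x_{(k)}^{2}\equiv x_{(k+1)}$ (modulo $a$- and $u$-corrections) whose underlying reduction is $y_1,y_1^{2},\dots,y_1^{2^{k}}$ inside $\mathbb{Z}/2[y_1]$; keeping the simple system and this reduction synchronized is what will drive the bookkeeping.

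The easy half of the induction is twist-fixed. The forward path-space fibration feeding the Borel theorem \ref{Bor} only adds trivial summands, so it keeps the twist constant; this is precisely the iteration already carried out for $H^{\star}(K_{n+\sigma})$, and it immediately yields $H^{\star}(K_{m+1+k(\sigma-1)})$ for all $m\geq 0$ once the diagonal entry at level $k$ is known. So the entire content of the induction is the twist-raising step, and here I would feed the known level-$(k-1)$ ring into the $\sigma$-path-space fibration
$$K_{1+(k-1)(\sigma-1)}\longrightarrow P^{\sigma}K_{(2-k)+k\sigma}\longrightarrow K_{(2-k)+k\sigma}$$
through Kronholm's $RO(C_2)$-graded Serre spectral sequence \cite{WKronholm2010-2}, whose $E_2$-page depends only on total degree. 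This computes $H^{\star}(K_{(2-k)+k\sigma})=H^{\star}(K_{[1+k(\sigma-1)]+1})$, which sits one trivial dimension above the target; descending that single trivial dimension via the Eilenberg--Moore spectral sequence \cite[Chapter 5]{Hillfree} for $K_{1+k(\sigma-1)}=\Omega K_{(2-k)+k\sigma}$ then reaches the diagonal entry. The composite shift on an inherited generator is $+\sigma$ from the twisted transgression followed by $-1$ from the loop, i.e.\ exactly $\sigma-1$, which explains why the first $k$ generators are the $k$ generators of the level-$(k-1)$ ring shifted by $\sigma-1$.

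The relations are the second ingredient, and they are where the explicit Steenrod action does the work. Once the generators are located, the square of each of the first $k$ generators is the power operation $\mathbf{s}_{|x_{(i)}|,1}$, which Section \ref{s5} expresses as a combination of the $Sq^{j}_{C_2}$ via the $C_2$-equivariant Cartan formula and Lemma \ref{lem1}. A degree count then forces $x_{(i)}^{2}$ into the span of the remaining generators, with leading term $u\,x_{(i+1)}$ and lower corrections that are $a$- or $u$-multiples of earlier generators; setting $u=1,a=0$ recovers the underlying tower $y_1^{2^{i-1}}\mapsto y_1^{2^{i}}$ and pins the coefficients. The same degree count, applied to the single polynomial generator $x_{(k+1)}$, uses that its underlying total degree must be $2^{k}$ and that it is carried on a maximally twisted class, which forces its degree to be $k\sigma+2^{k}-k$, as claimed.

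The hard part will be the twist-raising step itself. Borel's theorem \ref{Bor} is only established for the trivial path-space fibration, and the transgression in the genuinely twisted $\sigma$-direction is not a clean suspension: a naive ``raise by $\sigma$'' applied to $K_1=\Omega^{\sigma}K_{\rho}$ already produces generators in degrees $2^{j}+\sigma$, disagreeing with the true $H^{\star}(K_{\rho})$, so the differentials in Kronholm's spectral sequence must be pinned down using the $S^{\sigma}$-attaching data together with the classes $a\in H^{\sigma}(pt)$ and $u\in H^{\sigma-1}(pt)$. Moreover, since $|V^{C_2}|=1-k<0$ for $k\geq 2$ the diagonal objects are not honest spaces, so one is forced to argue spectrum-level and to control the Eilenberg--Moore descent, whose multiplicative extensions in the $RO(C_2)$-graded setting are exactly the $u$- and $a$-divisible terms flagged after \eqref{6} as the ones that ``make things difficult.'' Resolving those extensions -- rather than the bookkeeping of generators and degrees, which the underlying-reduction invariant makes routine -- is the obstacle that keeps the statement a conjecture.
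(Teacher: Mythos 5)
The statement you are attempting is labeled a \emph{conjecture} in the paper, and the paper in fact offers no proof of it: the only support given there is the verification in the base cases $k=0,1$ (the known rings $H^{\ast}(K_1)$ and $H^{\ast}(K_{\sigma})$), the illustrative $k=3$ example, and the observation that the twist-constant direction is handled by the Borel theorem \ref{Bor}. Your proposal reproduces exactly this state of affairs rather than improving on it. Your bookkeeping is sound and agrees with the paper's evidence: Theorem \ref{1}(i) does force every $K_{1+k(\sigma-1)}$ to have underlying space $RP^{\infty}$, the degree counts (underlying total degree $2^{k}$ for the last generator, shift by $\sigma-1$ for the inherited ones) are consistent with that reduction, and the twist-constant half of your induction is precisely the iteration already carried out in Section \ref{s5}. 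But the entire mathematical content of the conjecture is the twist-raising step, and there your argument is not a proof but a plan that you yourself concede cannot be completed: you note that the differentials of Kronholm's spectral sequence in the $\sigma$-direction are not determined by its $E_2$-page (which depends only on total degree), that the naive transgression applied to $K_1=\Omega^{\sigma}K_{\rho}$ would give generators in degrees $2^{j}+\sigma$, contradicting the known $H^{\star}(K_{\rho})$, and that the Eilenberg--Moore descent carries unresolved $u$- and $a$-extension problems.

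Concretely, two steps fail as written. First, the Borel theorem \ref{Bor} as established in the paper requires a simple system of \emph{transgressive} generators and controls the transgression only in the trivial direction; nothing in the paper or in your proposal identifies the differentials of the $RO(C_2)$-graded Serre spectral sequence for the fibration $K_{1+(k-1)(\sigma-1)}\rightarrow P^{\sigma}K_{(2-k)+k\sigma}\rightarrow K_{(2-k)+k\sigma}$, and your own $K_{\rho}$ example shows the answer cannot be the naive one -- the degrees must undergo Kronholm-type shifts, which is exactly the phenomenon the conjecture is trying to predict, so assuming it here would be circular. Second, for $k\geq 2$ the representation $V=1+k(\sigma-1)$ has $|V^{C_2}|=1-k<1$, so $K_V$ is not even covered by Definition \ref{EMS}; your appeal to a ``spectrum-level'' argument is a gesture toward the missing foundation, not a substitute for it. The proposal should therefore be read as a correct and well-organized identification of \emph{why} the statement is a conjecture -- matching the paper's own framing, including its remark that the missing input is $H^{\star}(K_{n\sigma})$ for $n\geq 2$ -- and not as a proof of it.
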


\begin{example} Let $k=3$. Then
$$H^{\ast}(K_{3\sigma-2})=P(x_{3\sigma-2}, x_{3\sigma-1}, x_{3\sigma+1}, x_{3\sigma+5})/(x_{3\sigma-2}^2+\cdots,x_{3\sigma-1}^2+\cdots,x_{3\sigma+1}^2+\cdots)$$
where the other terms in the relations are linear. The resulting simple system of generators is
$$\{x_{3\sigma-2}, x_{3\sigma-1}, x_{3\sigma+1}\}\cup \{x_{3\sigma+5}^{2^{i}}| i\geq 0\}.$$
\end{example}

Now, we give another useful lemma for computations, which is the cohomology analogous of the Lemma 2.7. in \cite{Wil-Beh}.

There is a forgetful map $$\Phi^{e}: H^{V}_{C_2}(X; \underline{Z/2})\longrightarrow H^{|V|}(X^e; Z/2)$$
from the equivariant cohomology to the non-equivariant cohomology with $Z/2$-coefficients. And also, we have a fixed point map 
$$\Phi^{C_2}: H^{V}_{C_2}(X; \underline{Z/2})\longrightarrow H^{|V^{C_2}|}(X^{\Phi C_2}; Z/2)$$
where $X^{\Phi C_2}$ is a geometric fixed point of a $G$-space $X$. Now, I will state the lemma, whose proof is the analog of Lemma 2.7. in \cite{Wil-Beh}.

\begin{lemma}\label{2} Let $X$ be a genuine $C_2$-spectrum, and suppose that $\{b_i\}$ is a set of elements of $H^{\star}(X)$ such that
\begin{itemize}
\item[(i)]$\{\Phi^e(b_i)\}$ is a basis of $H^{\ast}(X^e)$, and
\item[(ii)]$\{\Phi^{C_2}(b_i)\}$ is a basis of $H^{\ast}(X^{\Phi C_2})$
\end{itemize}
Then $H^{\star}(X)$ is free over $H^{\star}(pt)$ with the basis $\{b_i\}$.
\end{lemma}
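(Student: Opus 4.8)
The plan is to realize the family $\{b_i\}$ as a single map of $H\underline{Z/2}$-module spectra out of a free module and then to show that this map is an equivalence of genuine $C_2$-spectra; the freeness of $H^\star(X)$ over $H^\star(pt)=M^{C_2}_2$ will then follow immediately on passing to homotopy. It is cleanest to run the argument in homology and dualize at the end, so I first treat $H_\star(X)=\underline{\pi}_\star^{C_2}(H\underline{Z/2}\wedge X)(C_2/C_2)$. Under the finite-type hypothesis present in our applications, a basis $\{b_i\}$ of $H^\star(X)$ with properties (i) and (ii) dualizes to a corresponding set of homology classes with the analogous properties, which I continue to call $\{b_i\}$. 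Each such class in degree $\deg b_i$ is a map $S^{\deg b_i}\to H\underline{Z/2}\wedge X$, and since the target is an $H\underline{Z/2}$-module these assemble into an $H\underline{Z/2}$-linear map
$$\beta\colon\bigvee_i \Sigma^{\deg b_i}H\underline{Z/2}\longrightarrow H\underline{Z/2}\wedge X.$$
The source has homotopy the free module $\bigoplus_i\Sigma^{\deg b_i}M^{C_2}_2$, so once I show $\beta$ is an equivalence, $H_\star(X)$ is free on $\{b_i\}$, and dualizing returns the stated freeness of $H^\star(X)$.

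To prove $\beta$ is an equivalence I would use that the underlying functor $(-)^e$ and the geometric fixed point functor $\Phi^{C_2}$ are jointly conservative on genuine $C_2$-spectra. This is the isotropy separation principle: for any genuine $Y$ the cofiber sequence $EC_{2+}\wedge Y\to Y\to \tilde{E}C_2\wedge Y$ shows that if $Y^e\simeq\ast$ then the free part $EC_{2+}\wedge Y\simeq\ast$, and if in addition $\Phi^{C_2}Y\simeq\ast$ then $\tilde{E}C_2\wedge Y\simeq\ast$, forcing $Y\simeq\ast$. Applying this to the cofiber of $\beta$, it suffices to check that $\beta$ becomes an equivalence after $(-)^e$ and after $\Phi^{C_2}$. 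On underlying spectra $(H\underline{Z/2})^e\simeq H\mathbb{F}_2$, and $\beta^e$ realizes the $\mathbb{F}_2$-linear map sending the $i$th generator to $\Phi^e(b_i)$, which is an isomorphism onto $H_\ast(X^e)$ by hypothesis (i). On geometric fixed points $\Phi^{C_2}$ corresponds to inverting the Euler class $a$, so that $\pi_\star\Phi^{C_2}H\underline{Z/2}=a^{-1}M^{C_2}_2$, and $\Phi^{C_2}\beta$ realizes the map induced by the classes $\Phi^{C_2}(b_i)$, which is an isomorphism onto $H_\ast(X^{\Phi C_2})$ by hypothesis (ii). Hence both restrictions of $\beta$ are equivalences, and therefore $\beta$ itself is.

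The step I expect to be the main obstacle is the bookkeeping that makes the joint-conservativity argument legitimate, and it is precisely here that genuineness of $X$ is indispensable. Purely algebraically the two functors cannot detect freeness over $M^{C_2}_2$: the negative cone of $M^{C_2}_2$ (the classes $\theta/(a^iu^j)$) is $a$-power torsion and is annihilated by the forgetful map, so it is invisible both to $\Phi^{C_2}=a^{-1}(-)$ and to $\Phi^e$. A localization-and-mod-$a$ argument would therefore fail to exclude a spurious negative-cone summand; it is only the spectrum-level statement, that no nonzero genuine $C_2$-spectrum has both trivial underlying spectrum and trivial geometric fixed points, that rules such a summand out. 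The remaining care is twofold: first, the passage between homology and the cohomological formulation of the lemma, which requires the finite-type hypothesis so that $H^\star(X)$ is the $M^{C_2}_2$-linear dual of $H_\star(X)$ and one may work with wedges rather than products; and second, the identification of $\Phi^{C_2}$ applied to the assembled map $\beta$ with the map induced by $\{\Phi^{C_2}(b_i)\}$ on $H_\ast(X^{\Phi C_2})$, which uses monoidality of $\Phi^{C_2}$ and enough dualizability to commute $\Phi^{C_2}$ past the smash constructions involved.
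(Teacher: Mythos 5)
Your proposal is correct and is essentially the paper's own argument: the paper proves this lemma by deferring to Lemma 2.7 of Behrens--Wilson, whose proof is exactly what you wrote --- assemble the classes $b_i$ into a map of $H\underline{Z/2}$-modules $\bigvee_i \Sigma^{\deg b_i}H\underline{Z/2}\to H\underline{Z/2}\wedge X$, check it is an equivalence after $(-)^e$ and after $\Phi^{C_2}$, and conclude by joint conservativity via isotropy separation. The only addition on your side is the dualization between homology and cohomology (with the finite-type caveat you flag), which is precisely the step the paper glosses over in calling its lemma the ``cohomology analogue'' of the cited homological statement.
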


One project is to finish calculations of the $RO(C_2)-$graded $C_2$-equivariant cohomology of $C_2$-equivariant Eilenberg-Mac Lane spaces by using Caruso theorem \ref{1} and lemma \ref{2}, and then Eilenberg-Moore spectral sequences of Michael A. Hill \cite[Chapter 5]{Hillfree}.

\begin{conjecture} $H^{\star}(K_{r\sigma+s})$ is a polynomial algebra on certain $C_2$-equivariant Steenrod operations $Sq_{C_2}^{I}(\iota_{r\sigma+s})$ divided by certain powers of $u$, where $e(I)<r\sigma+s$, and $\iota_{r\sigma+s}$ is the fundamental class, and $V<V^{'}$ if and only if $V^{'}=V+W$ for some actual representations $W$ with positive degree.
\end{conjecture}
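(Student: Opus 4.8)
The plan is to prove the statement by induction on the representation $V=r\sigma+s$, starting from the computed base cases $H^{\star}(K_1)=P(x_1)$ and $H^{\star}(K_{\sigma})\cong H^{\star}(pt)[c,d]/(c^2=ac+ud)$, and passing from $V$ to $V+1$ by the Borel theorem applied to the path-space fibration $K_V\to PK_{V+1}\to K_{V+1}$. Given a simple system of transgressive generators for $H^{\star}(K_V)$ of the form $u^{-N}Sq^{I}_{C_2}(\iota_V)$ with $e(I)<V$ (the transgressivity being forced, as in the classical Serre spectral sequence, by the contractibility of $PK_{V+1}$), Theorem \ref{Bor} gives that $H^{\star}(K_{V+1})$ is polynomial on the transgressions $\Sigma(x_i)$. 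As in Serre's computation the fundamental class $\iota_V$ transgresses to $\iota_{V+1}$, and since transgression commutes with the equivariant Steenrod squares (the equivariant analogue of Kudo's theorem), $Sq^{I}_{C_2}(\iota_V)$ transgresses to $Sq^{I}_{C_2}(\iota_{V+1})$, while the power operations $x\mapsto x^{2^{l}}=\mathbf{s}_{V,l}(x)$ transgress to higher squares and raise the excess in a controlled way.

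Next I would carry out the $u$-divisibility bookkeeping, which is the essential equivariant complication absent classically. The operations $\mathbf{s}_{V,l}$ are precisely the $u$-corrected power operations computed before Theorem \ref{Bor}: for $x\in H^{a+b\sigma}$ one has $x^2=u^{2r_1}Sq^{a+b}_{C_2}(u^{-r_1}x)$ with $b=r_1+\lfloor(a+b)/2\rfloor$. Consequently the polynomial generators that appear are genuine $u$-divided Steenrod operations $u^{-N}Sq^{I}_{C_2}(\iota)$, and the induction must simultaneously track the divisibility exponent $N$ and the excess $e(I)$. The excess bound $e(I)<V$ is the equivariant instability constraint of axiom (vii), together with axiom (vi) identifying $Sq^{2k}_{C_2}(x)$ with $x^2$ on a class of degree $k\rho$; these ensure that only operations of excess strictly below $V$ survive as independent generators.

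To confirm that the generators produced by the Borel induction form a complete, algebraically independent set rather than merely an upper bound, I would invoke Lemma \ref{2} together with the Caruso fixed-point splitting of Theorem \ref{1}. The forgetful map $\Phi^e$ sends $H^{\star}(K_{r\sigma+s})$ to $H^{\ast}(K(\mathbb{Z}/2,r+s))$, which by Serre's theorem \cite{Serre} is polynomial on $Sq^{I}(\iota_{r+s})$ with $e(I)<r+s$, while the geometric-fixed-point map $\Phi^{C_2}$ lands in $H^{\ast}\big(\prod_{j=s}^{r+s}K(\mathbb{Z}/2,j)\big)$, again a classical polynomial algebra on Steenrod operations. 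Checking that the candidate generators $u^{-N}Sq^{I}_{C_2}(\iota)$ map under $\Phi^e$ and $\Phi^{C_2}$ to bases on each side, Lemma \ref{2} then forces $H^{\star}(K_{r\sigma+s})$ to be free over $M^{C_2}_2$ on precisely this set, which pins down the claimed structure.

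The hard part will be controlling the $u$-divisibility uniformly across all $(r,s)$ and the genuinely two-dimensional growth in the sign direction. Increasing the trivial part $s$ closely follows the classical argument, but increasing the sign part $r$ is subtle: the equivariant Cartan formulas \ref{car}, \ref{car2} and the Adem relations of axiom (v) all carry $u$- and $a$-correction terms, so the naive transgression classes are defined only after dividing by the correct power of $u$, and one must prove operation by operation that $Sq^{I}_{C_2}(\iota)$ is actually divisible by that power so that $u^{-N}Sq^{I}_{C_2}(\iota)$ exists integrally in $H^{\star}$. The delicacy already appears in the identities $(u^2)Sq^3_{C_2}=0$ while $(u^2)\chi(Sq^3_{C_2})=a^3$: $u$-divisibility depends on the conjugate action, so one must use the correctly conjugated operations throughout. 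I expect that making this $u$-exponent bookkeeping uniform, rather than case-by-case in low dimensions as in the worked examples, is the central obstacle, and it is precisely why the assertion is stated here as a conjecture.
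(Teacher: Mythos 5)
The statement you are addressing is labelled a \emph{conjecture} in the paper: there is no proof to compare against. The paper's actual results cover only $H^{\star}(K_{n+\sigma})$ for $n\geq 0$, and the general case $K_{r\sigma+s}$ is explicitly deferred to a future project combining Theorem \ref{1}, Lemma \ref{2}, and the Eilenberg--Moore spectral sequence. Your proposal reproduces essentially that same program, and like the paper it stops short of a proof --- as you concede in your final paragraph. The decisive gap is structural: the path-space fibration only ever deloops in the trivial direction, since $\Omega K_{V+1}\simeq K_V$ lowers the trivial summand $s$ by one while fixing the sign part $r$. Starting from the known cases $K_1$ and $K_{\sigma}$, your Borel induction therefore reaches exactly $K_n$ and $K_{n+\sigma}$ --- the computations already carried out in Section \ref{s5} --- and can never produce $K_{r\sigma+s}$ with $r\geq 2$. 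For that you would need the base cases $H^{\star}(K_{r\sigma})$, $r\geq 2$, or equivalently some fibration with fiber $K_V$ and base $K_{V+\sigma}$ to which Kronholm's Serre spectral sequence and Theorem \ref{Bor} apply; your proposal offers no mechanism for either, and the paper itself flags precisely this as the open point (``if we knew $H^{\star}(K(\underline{Z/2},n\sigma))$ for $n\geq 2$\dots'').

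Even within the trivial-direction induction, several load-bearing steps are asserted rather than proved. First, you invoke an equivariant Kudo transgression theorem (transgression commuting with $Sq^{I}_{C_2}$ in the $RO(C_2)$-graded Serre spectral sequence); nothing of the sort is established in the paper or its cited references, and the $u$- and $a$-correction terms in the Cartan formulas \ref{car}, \ref{car2} mean the classical proof does not transport verbatim --- indeed the paper deliberately phrases its generators through the power operations $\mathbf{s}_{V,l}$ rather than through classes of the form $u^{-N}Sq^{I}_{C_2}(\iota)$, precisely to sidestep this. Second, the integral existence of $u^{-N}Sq^{I}_{C_2}(\iota)$, i.e.\ the divisibility of $Sq^{I}_{C_2}(\iota)$ by the required power of $u$, is the actual content of the conjecture, and you leave it open. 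Third, the freeness argument via Lemma \ref{2} (stated for genuine $C_2$-spectra, so already requiring care for the spaces $K_{r\sigma+s}$) demands verifying that your candidate classes hit bases under both $\Phi^{e}$ and $\Phi^{C_2}$; this is genuinely delicate because $\Phi^{C_2}(u)=0$ (it lands in cohomological degree $-1$ of a point), so the image of a $u$-divided class under $\Phi^{C_2}$ is \emph{not} determined by $\Phi^{C_2}(Sq^{I}_{C_2}(\iota))$, while Caruso's splitting (Theorem \ref{1}) makes the target a large tensor product of classical Serre algebras whose basis must be matched class by class. In short, your text is an accurate map of the terrain, coinciding with the paper's own proposed program, but it is not a proof; the statement remains a conjecture on both your account and the paper's.
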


\begin{example}$H^{\star}(K(\underline{Z/2}, 1+\sigma))$ is the polynomial algebra generated by elements $Sq^I(\iota_{1+\sigma)}$, where $I$ is admissible and $e(I)<1+\sigma$. So, it is a polynomial algebra
$$P(Sq^0(\iota_{1+\sigma}), Sq^2Sq^1(\iota_{1+\sigma}), Sq^4Sq^2Sq^1(\iota_{1+\sigma}),\cdots).$$
Then, it is shortly
$$P(x_{\rho}, x_{1+2\rho}, x_{1+4\rho}, x_{1+8\rho}, \cdots).$$
\end{example}

By now, we have calculated the cohomology of $K(\underline{Z/2}, n+\sigma)$ for $n\geq 0$. To calculate other cases, if knew $H^{\ast}(K_{n\sigma})$ for $n\geq 2$, we could use the Eilenberg-Moore spectral sequence \cite[Chapter 5]{Hillfree} anf the Borel theorem for the path-space fibration
$$\Omega K(\underline{Z/2}, V)\longrightarrow P(K(\underline{Z/2}, V))\longrightarrow K(\underline{Z/2}, V).$$

For example, for the path-space fibration
$$K(\underline{Z/2}, \sigma)\longrightarrow P(K(\underline{Z/2}, 1+\sigma))\longrightarrow K(\underline{Z/2}, 1+\sigma).$$
\end{section}
$E_{\infty}$-term of the Eilenberg-Moore spectral sequence is $$E_{\infty}=E(x_{\sigma}, x_{\rho}, x_{2\rho}, x_{4\rho}, \cdots)$$
with the relations
$$x_{\sigma}^2=ax_{\sigma}+ux_{1+\sigma}$$
$$x_{1+\sigma}^2=x_{2+2\sigma}$$
$$x_{2+2\sigma}^2=x_{4+4\sigma}$$
$$\vdots$$
$$x^2_{2^i\rho}=x_{2^{i+1}\rho}$$
$$\vdots$$
for $i\geq 0$. As a result, $H^{\star}(K(\underline{Z/2}, \sigma), \underline{Z/2})$ is a quadratic extension of a polynomial algebra, as it has already known.

\bibliographystyle{plain}
\bibliography{math}

\begin{thebibliography}{10}

\bibitem{Wil-Beh}
Mark Behrens and Dylan Wilson.
\newblock A $c_2$-equivariant analog of mahowald's thom spectrum theorem.
\newblock {\em Proceedings of the American Mathematical Society}, 146, 07 2017.

\bibitem{JLCaruso1999}
Jeffrey~L. Caruso.
\newblock Operations in equivariant {${\bf Z}/p$}-cohomology.
\newblock {\em Math. Proc. Cambridge Philos. Soc.}, 126(3):521--541, 1999.

\bibitem{DSantos2003}
Pedro~F. dos Santos.
\newblock A note on the equivariant {D}old-{T}hom theorem.
\newblock {\em J. Pure Appl. Algebra}, 183(1-3):299--312, 2003.

\bibitem{Hillfree}
Michael~A. Hill.
\newblock Freeness and equivariant stable homotopy.
\newblock {\em Journal of Topology}, 15(2):359--397.

\bibitem{HuKriz1}
Po~Hu and Igor Kriz.
\newblock Real-oriented homotopy theory and an analogue of the
  {A}dams-{N}ovikov spectral sequence.
\newblock {\em Topology}, 40(2):317--399, 2001.

\bibitem{W2010}
William~C. Kronholm.
\newblock A freeness theorem for {$RO(\Bbb Z/2)$}-graded cohomology.
\newblock {\em Topology Appl.}, 157(5):902--915, 2010.

\bibitem{WKronholm2010-2}
William~C. Kronholm.
\newblock The {${\rm RO}(G)$}-graded {S}erre spectral sequence.
\newblock {\em Homology Homotopy Appl.}, 12(1):75--92, 2010.

\bibitem{LMM1981}
G.~Lewis, J.~P. May, and J.~McClure.
\newblock Ordinary {$RO(G)$}-graded cohomology.
\newblock {\em Bull. Amer. Math. Soc. (N.S.)}, 4(2):208--212, 1981.

\bibitem{Lewis1992}
L.~Gaunce Lewis, Jr.
\newblock Equivariant {E}ilenberg-{M}ac {L}ane spaces and the equivariant
  {S}eifert-van {K}ampen and suspension theorems.
\newblock {\em Topology Appl.}, 48(1):25--61, 1992.

\bibitem{Lewis92}
L.~Gaunce Lewis, Jr.
\newblock The equivariant {H}urewicz map.
\newblock {\em Trans. Amer. Math. Soc.}, 329(2):433--472, 1992.

\bibitem{Lima-Filho1997}
P.~Lima-Filho.
\newblock On the equivariant homotopy of free abelian groups on {$G$}-spaces
  and {$G$}-spectra.
\newblock {\em Math. Z.}, 224(4):567--601, 1997.

\bibitem{Mah:ImJ}
Mark Mahowald.
\newblock The image of {$J$} in the {$EHP$} sequence.
\newblock {\em Ann. of Math. (2)}, 116(1):65--112, 1982.

\bibitem{May:Alaska}
J.~P. May.
\newblock {\em Equivariant homotopy and cohomology theory}, volume~91 of {\em
  CBMS Regional Conference Series in Mathematics}.
\newblock Published for the Conference Board of the Mathematical Sciences,
  Washington, DC, 1996.
\newblock With contributions by M. Cole, G. Comeza{\~n}a, S. Costenoble, A. D.
  Elmendorf, J. P. C. Greenlees, L. G. Lewis, Jr., R. J. Piacenza, G.
  Triantafillou, and S. Waner.

\bibitem{MilA}
John Milnor.
\newblock The {S}teenrod algebra and its dual.
\newblock {\em Ann. of Math. (2)}, 67:150--171, 1958.

\bibitem{MR0226634}
Robert~E. Mosher and Martin~C. Tangora.
\newblock {\em Cohomology operations and applications in homotopy theory}.
\newblock Harper \& Row, Publishers, New York-London, 1968.

\bibitem{Nicolas2015}
Nicolas Ricka.
\newblock Subalgebras of the {$\Bbb Z/2$}-equivariant {S}teenrod algebra.
\newblock {\em Homology Homotopy Appl.}, 17(1):281--305, 2015.

\bibitem{1207.3121}
Jo\"el Riou.
\newblock Op\'erations de {S}teenrod motiviques.
\newblock https://arxiv.org/abs/1207.3121, 2012.

\bibitem{Serre}
Jean-Pierre Serre.
\newblock {\em Repr\'esentations lin\'eaires des groupes finis}.
\newblock Hermann, Paris, 1967.

\bibitem{Voe:Red}
Vladimir Voevodsky.
\newblock Reduced power operations in motivic cohomology.
\newblock {\em Publ. Math. Inst. Hautes \'Etudes Sci.}, (98):1--57, 2003.

\end{thebibliography}
\end{document}